\documentclass[11pt,twoside]{amsart}
\usepackage{amsxtra}
\usepackage{color}
\usepackage{amsopn}
\usepackage{amsmath,amsthm,amssymb,arydshln}
\usepackage{mathrsfs,mathtools}
\usepackage{stmaryrd}
\usepackage{enumerate}
\usepackage{xcolor}
\usepackage{pifont}
\usepackage{adjustbox}
\usepackage{tikz}
\usepackage{color}
\definecolor{cobalt}{RGB}{61,89,171}
\usepackage[colorlinks,citecolor=cobalt,linkcolor=cobalt,urlcolor=cobalt,pdfpagemode=UseNone,backref = page
]{hyperref}

\newtheorem{theorem}{Theorem}[section]

\newtheorem{proposition}[theorem]{Proposition}

\newtheorem*{theorem*}{Theorem}
\theoremstyle{definition}
\newtheorem{definition}[theorem]{Definition}

\newtheorem{example}[theorem]{Example}
\newtheorem{remark}[theorem]{Remark}
\newtheorem{question}{Question}

\newcommand{\R}{{\mathbb R}}

\newcommand{\C}{{\mathbb C}}

\newcommand{\beq}{\begin{equation}}
\newcommand{\eeq}{\end{equation}}

\newcommand{\f}{\varphi}



\newcommand{\psip}{\psi}


\newcommand{\SU}{{\mathrm{SU}}}

\newcommand{\G}{{\mathrm G}}


\newcommand{\W}{\wedge}



\newcommand{\frg}{\mathfrak{g}}

\newcommand{\frh}{\mathfrak{h}}

\newcommand{\frn}{\mathfrak{n}}



\newcommand{\st}{\ |\ }

\newcommand{\sst}{\scriptscriptstyle}

\textheight=8in
\textwidth=6in
\oddsidemargin=0.25in
\evensidemargin=0.25in

\numberwithin{equation}{section}

\title[Special types of locally conformal closed G$_2$-structures]{Special types of locally conformal closed G$_{\mathbf2}$-structures}

\author{Giovanni Bazzoni}
\address{Departamento de \'Algebra, Geometr\'ia y Topolog\'ia \\ Universidad Complutense de Madrid\\ Plaza de Ciencias 3\\ 28040 Madrid\\ Spain}
\email{gbazzoni@ucm.es}

\author{Alberto Raffero}
\address{Dipartimento di Matematica ``G.~Peano'' \\ Universit\`a degli Studi di Torino\\ Via Carlo Alberto 10\\ 10123 Torino\\ Italy}
\email{alberto.raffero@unito.it}

\subjclass[2010]{53C10, 53C15, 53C30}
\keywords{locally conformal closed G$_2$-structure, coupled SU(3)-structure}

\begin{document}
\begin{abstract}
Motivated by analogous results in locally conformal symplectic geometry, we study different classes of G$_2$-structures defined by a locally conformal closed 3-form. 
In particular, we give a complete characterization of invariant exact locally conformal closed G$_2$-structures on simply connected Lie groups, 
and we present examples of compact manifolds with different types of locally conformal closed G$_2$-structures. 
\end{abstract}

\maketitle

\section{Introduction}

Over the last years, the study of smooth manifolds endowed with geometric structures defined by a differential form which is locally conformal to a closed one has attracted a great deal of attention. 
Particular consideration has been devoted to \emph{locally conformal K\"ahler} (\emph{LCK}) structures and their non-metric analogous, \emph{locally conformal symplectic} (\emph{LCS}) structures, 
see \cite{Bazzoni,Dragomir-Ornea,Ornea-Verbitsky,Vai} and the references therein. 
In both cases, the condition of being locally conformal closed concerns a suitable non-degenerate 2-form $\omega$, and is encoded in the equation $d\omega=\theta\W\omega$,  
where $\theta$ is a closed 1-form called the {\em Lee form}.  
LCK structures belong to the pure class $\mathcal{W}_4$ of Gray-Hervella's celebrated sixteen classes of almost Hermitian manifolds, see \cite{Gray-Hervella}. 
They are, in particular, Hermitian structures and their understanding on compact complex surfaces is related to the global spherical shell conjecture of Nakamura. 
As pointed out in \cite{Vai}, LCS geometry is intimately related to Hamiltonian mechanics. 
Very recently, Eliashberg and Murphy used $h$-principle arguments to prove that every almost complex manifold $M$ with a non-zero $[\theta]\in H^1_{dR}(M)$ admits a LCS structure 
whose Lee form is (a multiple of) $\theta$, see \cite{Eliashberg-Murphy}.

In odd dimensions, 7-manifolds admitting G$_2$-structures provide a natural setting where the locally conformal closed condition is meaningful. 
Recall that $\G_2$ is one of the exceptional Riemannian holonomy groups resulting from Berger's classification \cite{Berger}, and that a {\em $\G_2$-structure} on a 7-manifold $M$ is defined by a 3-form $\f$ 
with pointwise stabilizer isomorphic to G$_2$. Such a 3-form gives rise to a Riemannian metric $g_\f$ and to a volume form $dV_\f$ on $M,$ with corresponding Hodge operator $*_\f$.  
A G$_2$-structure $\f$ satisfying the conditions  
\begin{equation}\label{LCP}
d\f = \theta\W\f,\quad d*_\f\f = \frac43\,\theta\W*_\f\f, 
\end{equation}
for some closed 1-form $\theta$, is locally conformal to one which is both closed and coclosed. 
G$_2$-structures fulfilling \eqref{LCP}  correspond to the class $\mathcal{W}_4$ in Fern\'andez-Gray's classification \cite{FeGr}, and they are called \emph{locally conformal parallel} (\emph{LCP}), 
as being closed and coclosed for a G$_2$-form $\f$ is equivalent to being parallel with respect to the associated Levi Civita connection, see \cite{FeGr}.
It was proved by Ivanov, Parton and Piccinni in \cite[Thm.~A]{IPP} that a compact LCP $\G_2$-manifold is a mapping torus bundle over $\mathbb{S}^1$ with fibre a simply connected nearly K\"ahler manifold of 
dimension six and finite structure group. This shows that LCP $\G_2$-structures are far from abundant.

Relaxing the LCP condition by ruling out the second equation in \eqref{LCP} leads naturally to \emph{locally conformal closed}, a.k.a.~\emph{locally conformal calibrated} (\emph{LCC}), $\G_2$-structures. 
Also in this case, the unique closed 1-form $\theta$ for which $d\f=\theta\W\f$ is called \emph{Lee form}. 
LCC G$_2$-structures have been investigated in \cite{FFR,FeUg,FiRa1}; 
in particular, in \cite{FFR} the authors showed that a result similar to that of Ivanov, Parton and Piccinni holds for compact manifolds with a suitable LCC $\G_2$-structure. 
Roughly speaking, they are mapping tori bundle over $\mathbb{S}^1$ with fibre a 6-manifold endowed with a \emph{coupled} $\SU(3)$-structure, 
of which nearly K\"ahler structures constitute a special case. We refer the reader to Theorem \ref{StructResLCC} below for the relevant definitions and the precise statement.

In LCS geometry, one distinguishes between structures of the first kind and of the second kind (see \cite{BaMa,Vai}); 
the distinction depends on whether or not one can find an infinitesimal automorphism of the structure which is transversal to the foliation defined by the kernel of the Lee form. 
Another way to distinguish LCS structures is according to the vanishing of the class of $\omega$ in the Lichnerowicz cohomology defined by the Lee form. 
This leads to the notions of exact and non-exact LCS structures. 
A LCS structure of the first kind is always exact, but the converse is not true (see e.g.~\cite[Ex. 5.4]{BaMa}). The LCS structures constructed by Eliashberg and Murphy in \cite{Eliashberg-Murphy} are exact.

The purpose of this note is to bring ideas of LCS geometry into the study of LCC $\G_2$-structures. 
In Sections \ref{LCCsect} and \ref{LCCfirst}, after recalling the notion of conformal class of a LCC $\G_2$-structure, we consider exact structures, 
and we distinguish between structures of the first and of the second kind.
As it happens in the LCS case, the difference between first and second kind depends on the existence of a certain infinitesimal automorphism of the LCC G$_2$-structure $\f$ 
which is everywhere transversal to the kernel of the Lee form. 
As for exactness, every LCC G$_2$-structure $\f$ defines a class $[\f]_\theta$ in the Lichnerowicz cohomology $H^\bullet_\theta(M)$ associated with the Lee form $\theta$;  
$\f$ is said to be exact if  $[\f]_\theta=0\in H^3_\theta(M)$. 
As we shall see, LCC G$_2$-structures of the first kind are always exact, but the opposite needs not to be true (cf.~Example \ref{ex3}).

In the literature, there exist many examples of left-invariant LCP and LCC $\G_2$-structures on solvable Lie groups, see e.g.~\cite{Chiossi-Fino,FFR,FiRa1}. 
In the LCC case, the examples exhibited in \cite{FFR} admit a lattice, hence provide compact solvmanifolds endowed with an invariant LCC G$_2$-structure.
In Section \ref{LieAlgLCC}, we completely characterize left-invariant exact LCC G$_2$-structures on simply connected Lie groups: 
their Lie algebra is a rank-one extension of a six-dimensional Lie algebra with a coupled $\SU(3)$-structure by a suitable derivation (see Theorem \ref{ThmLCCex}). 
Moreover, using the classification of seven-dimensional nilpotent Lie algebras which carry a closed $\G_2$-structure by Conti and Fern\'andez \cite{CoFe}, 
we prove that no such nilpotent Lie algebra admits a LCC $\G_2$-structure (Proposition \ref{CnoLCC}). 
Finally, in Section \ref{exsect} we show that there exist solvable Lie groups admitting a left-invariant LCC G$_2$-structure which is not exact (see Example \ref{ExEinstein}). 
This is not true on nilpotent Lie groups, as every left-invariant LCC G$_2$-structure must be exact by a result of Dixmier \cite{Dixmier} on the Lichnerowicz cohomology.
We also show that, unlike in the LCS case, there exist exact LCC structures on unimodular Lie algebras which are not of the first kind (see Remark \ref{exact-not-1stkind}).


\section{Preliminaries}
Let $M$ be a seven-dimensional manifold. A G$_2$-reduction of its frame bundle, i.e., a {\em $\G_2$-structure}, is characterized by the existence of a 3-form $\f\in\Omega^3(M)$ which can be pointwise 
written as 
\[
\left.\f\right|_p = e^{127} + e^{347}+ e^{567} + e^{135} - e^{146} - e^{236} - e^{245}, 
\]
with respect to a basis $(e^1,\ldots,e^7)$ of the cotangent space $T^*_pM.$  Here, the notation $e^{ijk}$ is a shorthand for $e^i \W e^j \W e^k$. 
A {\em $\G_2$-structure} $\f$ gives rise to a Riemannian metric $g_\f$ with volume form $dV_\f$ via the identity
\[
g_\f(X,Y)\, dV_\f = \frac16\, \iota_{\sst X}\f \W \iota_{\sst Y} \f\W\f,
\]
for all vector fields $X,Y\in\mathfrak{X}(M)$. We shall denote by $*_\f$ the corresponding Hodge operator. 

When a G$_2$-structure $\f$ on $M$ is given, the G$_2$-action on $k$-forms (cf.~\cite[Sect.~2]{Bry}) induces the following decompositions: 
\begin{eqnarray*}
\Omega^2(M) & = & \Omega^2_7(M) \oplus \Omega^2_{14}(M), \\
\Omega^3(M) & = & \mathcal{C}^\infty(M)\,\f \oplus \Omega^3_7(M) \oplus \Omega^3_{27}(M),
\end{eqnarray*}
where
\[
\Omega^2_7(M) \coloneqq \left\{\iota_{\sst X}\f \st X\in\mathfrak{X}(M) \right\},\quad \Omega^2_{14}(M) \coloneqq \left\{\kappa\in\Omega^2(M) \st \kappa \W *_\f\f=0\right\},\\
\]
\[
\Omega^3_7(M) \coloneqq \left\{*_\f(\f\W\alpha) \st \alpha\in\Omega^1(M)\right\},~\Omega^3_{27}(M) \coloneqq \left\{\gamma\in\Omega^3(M) \st \gamma\W\f=0,~\gamma\W*_\f\f=0\right\}.
\]
The decompositions of $\Omega^k(M)$, for $k=4,5$, are obtained from the previous ones via the Hodge operator. 

By the above splittings, on a 7-manifold $M$ endowed with a G$_2$-structure $\f$ there exist unique differential forms 
$\tau_0\in\mathcal{C}^\infty(M)$, $\tau_1\in\Omega^1(M)$, $\tau_2\in\Omega^2_{14}(M)$, $\tau_3\in\Omega^3_{27}(M)$, such that
\begin{equation}\label{ITF}
d\f = \tau_0*_\f\f + 3\,\tau_1\W\f + *_\f\tau_3,\quad d*_\f\f = 4\,\tau_1\W*_\f\f + \tau_2\W\f, 
\end{equation}
see \cite[Prop.~1]{Bry1}. 
Such forms are called {\em intrinsic torsion forms} of the G$_2$-structure $\f$, as they completely determine its intrinsic torsion.  
In particular, $\f$ is {\em torsion-free} if and only if all of these forms vanish identically, that is, if and only if $\f$ is both {\em closed} ($d\f=0$) and {\em coclosed} ($d*_\f\f=0$). 
When this happens, $g_\f$ is Ricci-flat and its holonomy group is isomorphic to a subgroup of $\G_2$. 
 
In this paper, we shall mainly deal with the G$_2$-structures defined by a 3-form which is locally conformal equivalent to a closed one. 
As we will see in Section \ref{LCCsect}, this condition corresponds to the vanishing of the intrinsic torsion forms $\tau_0$ and $\tau_3$. 
For the general classification of G$_2$-structures, we refer the reader to \cite{FeGr}. 

Since G$_2$ acts transitively on the 6-sphere with stabilizer $\SU(3)$, a $\G_2$-structure $\f$ on a 7-manifold $M$ induces an SU(3)-structure on every oriented hypersurface. 
Recall that an {\em $\SU(3)$-structure} on a 6-manifold $N$ is the data of an almost Hermitian structure $(g,J)$ with fundamental 2-form $\omega\coloneqq g(J\cdot,\cdot)$, and 
a unit $(3,0)$-form $\Psi = \psi + i \hat\psi$, where $\psi,\hat\psi\in\Omega^3(N)$.  
By \cite{Hit}, the whole SU(3)-structure $(g,J,\Psi)$ is completely determined by the 2-form $\omega$ and the 3-form $\psi = \Re(\Psi)$. 
In particular, at each point $p$ of $N$ there exists a basis $(e^1,\ldots,e^6)$ of the cotangent space $T^*_pN$ such that 
\[
\omega|_p = e^{12}+e^{34}+e^{56},\quad \psi|_p = e^{135} - e^{146} - e^{236} - e^{245}. 
\]

In a similar way as in the case of G$_2$-structures, the intrinsic torsion of an SU(3)-structure $(\omega,\psi)$ is encoded in the exterior derivatives $d\omega$, $d\psi$, $d\hat\psi$ (see \cite{ChSa}). 
According to \cite[Def.~4.1]{ChSa}, an SU(3)-structure is called {\em half-flat} if $d\omega\W\omega=0$ and $d\psi=0$. 
A half-flat SU(3)-structure is said to be {\em coupled} if $d\omega=c\,\psi$, for some $c\in\R\smallsetminus\{0\}$, while it is called {\em symplectic half-flat} if $c=0$, that is, 
if the fundamental 2-form $\omega$ is symplectic. We shall refer to $c$ as the {\em coupling constant}. 

If $h: N \hookrightarrow M$ is an oriented hypersurface of a 7-manifold $M$ endowed with a G$_2$-structure $\f$, and $V$ is a unit normal vector field on $N$, then 
the SU(3)-structure on $N$ induced by $\f$ is defined by the differential forms 
\[
\omega \coloneqq h^*(\iota_{\sst V}\f),\quad \psi \coloneqq h^*\f. 
\]
The reader may refer to \cite{MaCa} for more details on the relationship between G$_2$- and SU(3)-structures in this setting.


\section{Locally conformal closed G$_2$-structures}\label{LCCsect}
A $\G_2$-structure $\f$ on a 7-manifold $M$ is said to be {\it locally conformal closed} or {\em locally conformal calibrated} ({\em LCC} for short) if  
\begin{equation}\label{lccdef}
d\f=\theta\W\f,
\end{equation} 
for some $\theta\in\Omega^1(M)$. Notice that such a 1-form is unique and closed, as the map 
\[
\cdot\W\f:\Omega^k(M)\rightarrow\Omega^{k+3}(M),\quad \alpha\mapsto \alpha\W\f,
\]
is injective for $k=1,2$. Moreover, it can be written in terms of $\f$ as follows 
\[
\theta= -\frac14*_\f\left(*_\f d\f\W\f \right),
\]
see \cite[Lemma 2.1]{FeUg}. 

\begin{definition} 
The unique closed 1-form $\theta$ fulfilling \eqref{lccdef} is called the {\it Lee form} of the LCC G$_2$-structure $\f$. 
\end{definition}

Henceforth, we denote a LCC G$_2$-structure $\f$ with Lee form $\theta$ by $(\f,\theta)$. As the name suggests, a LCC G$_2$-structure $(\f,\theta)$ is locally conformal equivalent to a closed one. 
Indeed, since $d\theta=0$, each point of $M$ admits an open neighborhood $\mathcal{U}\subseteq M$ where $\theta=df$, for some $f\in\mathcal{C}^\infty(\mathcal{U})$, and 
the 3-form $e^{-f}\f$ defines a closed G$_2$-structure on $\mathcal{U}$ with associated metric $e^{-\frac23 f}g_\f$ and orientation $e^{-\frac73 f}dV_\f$. 
Moreover, a LCC G$_2$-structure is globally conformal equivalent to a closed one when $\theta$ is exact, and it is closed if and only if $\theta$ vanishes identically. 

Given a LCC G$_2$-structure $(\f,\theta)$, we may consider its \emph{conformal class}
\[
\left\{e^{-f}\f \mid f\in \mathcal{C}^{\infty}(M)\right\}.
\]
It is easily seen that $\left(e^{-f}\f,\theta-df\right)$ is also LCC, hence the de Rham class $[\theta]\in H^1_{dR}(M)$ is an invariant of the conformal class.

\begin{remark}\ 
\begin{enumerate}[1)]
\item  The only non-identically vanishing intrinsic torsion forms of a LCC G$_2$-structure $(\f,\theta)$ are $\tau_1=\frac{1}{3}\theta$ and $\tau_2\in\Omega^2_{14}(M)$ (cf.~\eqref{ITF}). 
In particular
\[
d*_\f\f = \frac43\,\theta\W*_\f\f + \tau_2\W\f. 
\]
When $\tau_2$ vanishes identically, the G$_2$-structure is called {\em locally conformal parallel} (see \cite{Chiossi-Fino,IPP,Ver} for related results).

\item LCC G$_2$-structures belong to the class $\mathcal{W}_2\oplus\mathcal{W}_4$ in Fern\'andez-Gray classification \cite{FeGr}. 
The subclasses $\mathcal{W}_2$ and $\mathcal{W}_4$ correspond to closed and locally conformal parallel G$_2$-structures, respectively. 
\end{enumerate}
\end{remark}

Simple examples of manifolds admitting a LCC G$_2$-structure can be obtained as follows. 
Start with a 6-manifold $N$ endowed with a coupled SU(3)-structure $(\omega,\psi)$ such that $d\omega=c\psi$ (various examples can be found, for instance, in \cite{FiRa1,FiRa2,Raf}).  
Then, the product manifold $N\times\R$ admits a LCC G$_2$-structure given by the 3-form $\f=\omega\W dt+\psi$, 
where $dt$ denotes the global 1-form on $\R$. The Lee form of $\f$ is $\theta=-c\,dt$. 

More generally, if $(\omega,\psi)$ is coupled and $\nu\in\mathrm{Dif{}f}(N)$ is a diffeomorphism such that $\nu^*\omega=\omega$, then the quotient of $N\times\R$ by the infinite cyclic group 
of diffeomorphisms generated by $(p,t)\mapsto(\nu(p),t+1)$ is a smooth seven-dimensional manifold $N_\nu$ endowed with a LCC G$_2$-structure $\f$ (see \cite[Prop.~3.1]{FFR}). 
$N_\nu$ is called the {\em mapping torus} of $\nu$, and the natural projection $N_\nu\to\mathbb{S}^1$, $[(p,t)]\mapsto [t]$, is a smooth fibre bundle with fibre $N$. Notice that $N_{\mathrm{Id}} = N\times\mathbb{S}^1$.

In \cite{FeUg}, Fern\'andez and Ugarte proved that the LCC condition \eqref{lccdef} can be characterized in terms of a suitable differential subcomplex of the de Rham complex. In detail: 
\begin{proposition}[\cite{FeUg}]
A $\G_2$-structure $\f$ on a 7-manifold $M$ is LCC if and only if the exterior derivative of every 3-form in $\mathcal{B}^3(M)\coloneqq\mathcal{C}^\infty(M)\f\oplus\Omega^3_{27}(M)$ belongs to 
$\mathcal{B}^4(M)\coloneqq\Omega^4_7(M)\oplus\Omega^4_{27}(M)$. Consequently, $\f$ is LCC if and only if there exists the complex 
\[
0\rightarrow \mathcal{B}^3(M) \stackrel{\sst \hat{d}}{\rightarrow} \mathcal{B}^4(M) \stackrel{\sst \hat{d}}{\rightarrow}  \Omega^5(M) 
 \stackrel{\sst d}{\rightarrow}  \Omega^6(M) \stackrel{\sst d}{\rightarrow} \Omega^7(M)\rightarrow 0, 
\]
where $\hat{d}$ denotes the restriction of the differential $d$ to $\mathcal{B}^k(M)$, for $k=3,4$. 
\end{proposition}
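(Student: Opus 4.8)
The plan is to reformulate both membership conditions as the vanishing of an exterior product with $\f$, and then to read off the torsion forms $\tau_0$ and $\tau_3$ by a single Leibniz identity. I would begin with the standard identity $\f\W\f=0$ (immediate from the model $3$-form, since any two of its seven monomials share an index; equivalently, $\f\W\f$ is a $\G_2$-invariant element of $\Omega^6(M)\cong\Omega^1(M)$, which carries no trivial $\G_2$-summand). Combining this with the orthogonality of the $\G_2$-decomposition $\Omega^4(M)=\Omega^4_1(M)\oplus\Omega^4_7(M)\oplus\Omega^4_{27}(M)$ and with the pairing $\beta\W\f=\langle\beta,*_\f\f\rangle\,dV_\f$ (valid since $\beta\W\f$ is top-degree and $*_\f{*_\f}\f=\f$), I obtain
\[
\mathcal{B}^4(M)=\Omega^4_7(M)\oplus\Omega^4_{27}(M)=\{\beta\in\Omega^4(M)\mid\beta\W\f=0\},
\]
because $\Omega^4_1(M)=\mathcal{C}^\infty(M)\,{*_\f\f}$ and $\langle {*_\f\f},{*_\f\f}\rangle\neq0$, so that wedging with $\f$ detects exactly the $\Omega^4_1(M)$-component. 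On the domain side, every $\rho=f\f+\gamma\in\mathcal{B}^3(M)$ (with $f\in\mathcal{C}^\infty(M)$ and $\gamma\in\Omega^3_{27}(M)$) satisfies $\rho\W\f=0$: the term $f\,\f\W\f$ vanishes by $\f\W\f=0$, and $\gamma\W\f=0$ by the very definition of $\Omega^3_{27}(M)$.

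Next, for $\rho\in\mathcal{B}^3(M)$ I would differentiate the identity $\rho\W\f=0$. As $\rho$ has odd degree, Leibniz gives $0=d(\rho\W\f)=d\rho\W\f-\rho\W d\f$, hence $d\rho\W\f=\rho\W d\f$. Inserting the intrinsic torsion expansion \eqref{ITF}, namely $d\f=\tau_0\,{*_\f\f}+3\,\tau_1\W\f+{*_\f\tau_3}$, and using $\f\W\f=0$, $\gamma\W\f=0$, together with the orthogonality relations $\langle\f,\tau_3\rangle=0$ and $\langle\gamma,\f\rangle=0$, every mixed term collapses and I expect to be left with
\[
d\rho\W\f=\bigl(7f\tau_0+\langle\gamma,\tau_3\rangle\bigr)\,dV_\f.
\]
By the first paragraph, $d\rho\in\mathcal{B}^4(M)$ is then equivalent to the vanishing of this right-hand side.

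The equivalence with the LCC condition follows by varying $\rho$. Setting $\gamma=0$ and letting $f$ vary forces $\tau_0\equiv0$; setting $f=0$ and letting $\gamma$ range over $\Omega^3_{27}(M)$ forces $\langle\gamma,\tau_3\rangle\equiv0$, whence $\tau_3\equiv0$ upon choosing $\gamma=\tau_3\in\Omega^3_{27}(M)$. Conversely, $\tau_0=\tau_3=0$ makes the displayed $7$-form vanish for all $\rho$. Since \eqref{ITF} and the uniqueness of the torsion forms show that $\f$ is LCC exactly when $\tau_0=\tau_3=0$ (in which case $d\f=3\,\tau_1\W\f$, so that $\theta=3\,\tau_1$ is the Lee form), this proves that $d(\mathcal{B}^3(M))\subseteq\mathcal{B}^4(M)$ if and only if $\f$ is LCC. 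Granting the inclusion, the restrictions $\hat d\coloneqq d|_{\mathcal{B}^k(M)}$ for $k=3,4$ are well defined into the indicated targets, and every composition of consecutive arrows vanishes because $d^2=0$; hence the stated sequence is a complex.

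The point that needs the most care is the kernel description $\mathcal{B}^4(M)=\ker(\cdot\W\f)$, which rests on the pairing $\beta\W\f=\langle\beta,*_\f\f\rangle\,dV_\f$ together with the orthogonality of the $\G_2$-decomposition; once this is in place the rest is bookkeeping. One must also handle the identity $\f\W\f=0$ and keep careful track of the signs in $d(\rho\W\f)=d\rho\W\f-\rho\W d\f$ and in the reorderings of $1$-forms past $3$-forms when expanding $\rho\W d\f$.
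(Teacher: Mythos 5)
Your proof is correct. Note that the paper itself gives no argument for this proposition---it is quoted from \cite{FeUg}---so there is no in-paper proof to compare against; your route (identify $\mathcal{B}^4(M)$ as the kernel of $\cdot\W\f$ on $\Omega^4(M)$ via the pairing $\beta\W\f=\langle\beta,*_\f\f\rangle\,dV_\f$, differentiate $\rho\W\f=0$, and extract $\tau_0=0$ and $\tau_3=0$ by testing with $\rho=\f$ and $\rho=\tau_3$) is the standard one and agrees with the paper's own framing, which asserts in Section 2 that the LCC condition is precisely the vanishing of the intrinsic torsion forms $\tau_0$ and $\tau_3$. Two minor remarks: $\f\W\f=0$ holds for any odd-degree form by graded commutativity, so your invariant-theoretic aside is unnecessary; and in the extraction step, taking $f\equiv 1$, $\gamma=0$ already gives $7\tau_0=0$, while $f=0$, $\gamma=\tau_3$ gives $|\tau_3|^2\equiv 0$, exactly as you say. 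Your appeal to uniqueness of the torsion forms for the converse direction is also sound, since $\theta\W\f\in\Omega^4_7(M)$ shows that $d\f=\theta\W\f$ forces the $\Omega^4_1$- and $\Omega^4_{27}$-components of $d\f$ to vanish, with $\theta=3\tau_1$.
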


As the Lee form $\theta$ of a LCC G$_2$-structure $\f$ is closed, it is also possible to introduce the {\em Lichnerowicz} (or {\em Morse-Novikov}) {\em cohomology} 
of $M$ relative to $\theta$. 
This is defined as the cohomology $H^\bullet_\theta(M)$ corresponding to the complex $(\Omega^\bullet(M),d_\theta)$, where   
\[
d_\theta : \Omega^k(M)\rightarrow\Omega^{k+1}(M),\quad d_\theta\alpha \coloneqq d\alpha-\theta\W\alpha. 
\]
It is clear that the condition \eqref{lccdef} is equivalent to $d_\theta\f=0$. Thus, $\f$ defines a cohomology class $[\f]_\theta\in H^3_\theta(M)$. 
If $[\f]_\theta=0$, namely if $\f=d_\theta\sigma$ for some $\sigma\in\Omega^2(M)$, then the LCC G$_2$-structure $\f$ is said to be {\em $d_\theta$-exact} or {\em exact}. 
Notice that being exact is a property of the conformal class of $\f$. 

More generally, if a G$_2$-structure $\f$ is $d_\theta$-exact with respect to some closed 1-form $\theta$, then it is LCC with Lee form $\theta$. 
The converse might not be true, as we shall see in Example \ref{ExEinstein}.


\section{LCC G$_2$-structures of the first and of the second kind}\label{LCCfirst}
A special class of exact LCC G$_2$-structures can be introduced after some considerations on the infinitesimal automorphisms. 

Recall that the {\em automorphism group} of a seven-dimensional manifold $M$ endowed with a G$_2$-structure $\f$ is 
\[
\mathrm{Aut}(M,\f) \coloneqq \left\{F\in\mathrm{Dif{}f}(M) \st F^*\f=\f \right\}. 
\]
Clearly, $\mathrm{Aut}(M,\f)$ is a closed Lie subgroup of the isometry group $\mathrm{Iso}(M,g_\f)$ of the Riemannian manifold $(M,g_\f)$. 
Moreover, its Lie algebra is given by 
\[
\mathfrak{aut}(M,\f) \coloneqq \left\{X\in\mathfrak{X}(M) \st \mathcal{L}_{\sst X}\f=0 \right\}, 
\]
and every {\em infinitesimal automorphism} $X\in\mathfrak{aut}(M,\f) $ is a Killing vector field for $g_\f$. 

If $\f$ is closed and $X\in\mathfrak{aut}(M,\f)$, then the 2-form $\iota_{\sst X}\f\in\Omega^2_7(M)$ is easily seen to be harmonic. 
When $M$ is compact, this implies that $\mathfrak{aut}(M,\f)$ is Abelian with dimension bounded by $\mathrm{min}\{6,b_2(M)\}$ (see \cite{PoRa}). 

Let us now focus on the case when $\f$ is LCC with Lee form $\theta$ not identically vanishing.  
For every infinitesimal automorphism $X\in\mathfrak{aut}(M,\f)$, we have
\[
0 = d(\mathcal{L}_{\sst X}\f) = \mathcal{L}_{\sst X}d\f =  \mathcal{L}_{\sst X}\theta\W\f, 
\]
whence we see that $\mathcal{L}_{\sst X}\theta = 0$. 
Consequently, $\theta(X)$ is constant and the map 
\[
\ell_\theta:\mathfrak{aut}(M,\f) \rightarrow \R,\quad \ell_\theta(X)\coloneqq\theta(X), 
\]
is a well-defined morphism of Lie algebras. 
This suggests that various meaningful ideas of locally conformal symplectic geometry (e.g.~\cite{ABP,Ban,BaMa,Vai}) make sense for LCC G$_2$-structures, too. 
In particular, as the map $\ell_\theta$ is either identically zero or surjective, we can give the following G$_2$-analogue of a definition first introduced by Vaisman in \cite{Vai}.  
\begin{definition}
A LCC $\G_2$-structure $(\f,\theta)$ is of the \emph{first kind} if the Lie algebra morphism $\ell_\theta$ is surjective, while it is of the \emph{second kind} otherwise. 
\end{definition}

If there exists at least one point $p$ of $M$ where $\theta|_p=0$, then the LCC G$_2$-structure $\f$ is necessarily of the second kind. 
As a consequence, if $\f$ is a LCC G$_2$-structure with Lee form $\theta$ such that $\theta|_p=df|_p$ for some smooth function $f\in\mathcal{C}^\infty(M)$, then 
the 3-form $e^{-f}\f$ defines a LCC G$_2$-structure of the second kind, as the corresponding Lee form is $\theta-df$. 
Hence, being of the first kind is not an invariant of the conformal class of $\f$.

Assume now that $\f$ is a LCC G$_2$-structure of the first kind.  
Then, its Lee form $\theta$ is nowhere vanishing and, consequently, $\chi(M)=0$ if $M$ is compact.  
Let us consider an infinitesimal automorphism $U\in\mathfrak{aut}(M,\f)$ such that $\theta(U)=-1$. 
The condition $\mathcal{L}_{\sst U}\f=0$ is equivalent to
\[
\f =  d\sigma  -\theta\W  \sigma, 
\]
where $\sigma\coloneqq\iota_{\sst U}\f\in\Omega^2_7(M)$. 
Thus, a LCC G$_2$-structure of the first kind is always exact. 
More precisely, it belongs to the image of the restriction of $d_\theta$ to $\Omega^2_7(M)$. 

We shall say that an exact G$_2$-structure $\f$ is {\em of the first kind} if it can be written as $\f = d_\theta(\iota_{\sst X}\f)$ with $\theta(X)=-1$. 
\begin{proposition}\label{exactfirstK}
Let $(\f,\theta)$ be a LCC $\G_2$-structure. Then, $\f=d_\theta(\iota_{\sst X}\f)$ if and only if $\mathcal{L}_{\sst X}\f = (\theta(X)+1)\f.$ 
In particular, $\f$ is of the first kind if and only if $\theta(X)=-1$. 
\end{proposition}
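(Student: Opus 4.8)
The statement to prove is a clean ``if and only if'' characterization, so the plan is to unwind the definition of $d_\theta$ and use Cartan's magic formula. First I would compute $d_\theta(\iota_{\sst X}\f)$ directly from the definition $d_\theta\alpha = d\alpha - \theta\W\alpha$ applied to $\alpha = \iota_{\sst X}\f$, obtaining
\[
d_\theta(\iota_{\sst X}\f) = d(\iota_{\sst X}\f) - \theta\W\iota_{\sst X}\f.
\]
The goal is to relate the right-hand side to $\mathcal{L}_{\sst X}\f$, and the natural bridge is Cartan's formula $\mathcal{L}_{\sst X}\f = d(\iota_{\sst X}\f) + \iota_{\sst X}(d\f)$. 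Since $\f$ is LCC, I can substitute $d\f = \theta\W\f$ and expand $\iota_{\sst X}(\theta\W\f) = \theta(X)\,\f - \theta\W\iota_{\sst X}\f$ using the derivation property of the interior product on the wedge $\theta\W\f$.

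Carrying this out, Cartan's formula becomes
\[
\mathcal{L}_{\sst X}\f = d(\iota_{\sst X}\f) + \theta(X)\,\f - \theta\W\iota_{\sst X}\f,
\]
and rearranging gives
\[
d(\iota_{\sst X}\f) - \theta\W\iota_{\sst X}\f = \mathcal{L}_{\sst X}\f - \theta(X)\,\f.
\]
The left-hand side is exactly $d_\theta(\iota_{\sst X}\f)$, so I arrive at the key identity
\[
d_\theta(\iota_{\sst X}\f) = \mathcal{L}_{\sst X}\f - \theta(X)\,\f.
\]
From this single identity both directions of the equivalence are immediate: $\f = d_\theta(\iota_{\sst X}\f)$ holds if and only if $\mathcal{L}_{\sst X}\f - \theta(X)\,\f = \f$, i.e.\ $\mathcal{L}_{\sst X}\f = (\theta(X)+1)\f$, which is the claimed characterization.

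For the final sentence, recall that $\f$ being \emph{of the first kind} means it can be written as $\f = d_\theta(\iota_{\sst X}\f)$ with the normalization $\theta(X)=-1$. Substituting $\theta(X)=-1$ into the equivalent condition $\mathcal{L}_{\sst X}\f = (\theta(X)+1)\f$ gives $\mathcal{L}_{\sst X}\f = 0$, so $X\in\mathfrak{aut}(M,\f)$ is a genuine infinitesimal automorphism with $\theta(X)=-1$; this recovers precisely the first-kind situation discussed before the proposition. Honestly, there is no serious obstacle here: the whole proof is a two-line application of Cartan's formula together with the interior-product expansion of $\iota_{\sst X}(\theta\W\f)$. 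The only point demanding a little care is the sign bookkeeping in $\iota_{\sst X}(\theta\W\f) = \theta(X)\f - \theta\W\iota_{\sst X}\f$, where the minus sign comes from moving $\iota_{\sst X}$ past the $1$-form $\theta$; getting that sign right is what makes the coefficient $(\theta(X)+1)$ come out correctly rather than $(\theta(X)-1)$.
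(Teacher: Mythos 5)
Your proof is correct and follows essentially the same route as the paper: the paper's entire argument is the single identity $d_\theta(\iota_{\sst X}\f) = \mathcal{L}_{\sst X}\f - \theta(X)\,\f$, which you derive explicitly via Cartan's formula and the expansion $\iota_{\sst X}(\theta\W\f) = \theta(X)\,\f - \theta\W\iota_{\sst X}\f$ (the paper leaves this computation implicit). Your handling of the sign and of the final ``first kind'' statement matches the paper's intent exactly.
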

\begin{proof}
The first assertion follows from the identity 
\[
d_\theta(\iota_{\sst X}\f) = d(\iota_{\sst X}\f) - \theta\W\iota_{\sst X}\f = \mathcal{L}_{\sst X}\f -\theta(X)\f. 
\]
The second assertion is an immediate consequence of the above definition. 
\end{proof}

Some examples of LCC G$_2$-structures of the first and of the second kind will be discussed in Section \ref{exsect}. 
In particular, we will see that there exist exact G$_2$-structures of the form $\f=d_\theta\sigma$ with $\sigma\not\in\Omega^2_7(M)$.  

In \cite[Thm.~6.4]{FFR}, the structure of compact 7-manifolds admitting a LCC G$_2$-structure satisfying suitable properties was described. 
In view of the definitions introduced in this section, we can rewrite the statement of this structure theorem as follows.  
\begin{theorem}[\cite{FFR}]\label{StructResLCC}
Let $M$ be a compact seven-dimensional manifold endowed with a LCC $\G_2$-structure $(\f,\theta)$ of the first kind. If the $g_\f$-dual vector field $\theta^\sharp$ of $\theta$ belongs to $\mathfrak{aut}(M,\f)$, then
\begin{enumerate}[1)]
\item  $M$ is the total space of a fibre bundle over $\mathbb{S}^1$, and each fibre is endowed with a coupled $\SU(3)$-structure; 
\item  $M$ has a LCC $\G_2$-structure $\hat \varphi$ such that $d \hat \varphi = \hat \theta \wedge \hat\varphi$,  
where $\hat \theta$ is a 1-form with integral periods.
\end{enumerate}
\end{theorem}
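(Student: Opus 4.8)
The plan is to use the transverse infinitesimal automorphism provided by the hypotheses to foliate $M$ and to control the geometry induced on the leaves. First I would record the consequences of the assumptions. Since $(\f,\theta)$ is of the first kind, $\theta$ is nowhere vanishing, and as $d\theta=0$ the distribution $\ker\theta$ is integrable, defining a codimension-one foliation $\mathcal F$. Because $\theta^\sharp\in\aut(M,\f)$ it is a Killing field of $g_\f$, and by the discussion preceding the definition of the first kind $\ell_\theta(\theta^\sharp)=\theta(\theta^\sharp)=|\theta|_{g_\f}^2$ is a positive constant $c$. Thus $V\coloneqq c^{-1/2}\theta^\sharp$ is a globally defined $g_\f$-unit vector field, everywhere $g_\f$-orthogonal (hence transverse) to $\mathcal F$, with $V\in\aut(M,\f)$ and $\theta(V)=\sqrt c$; its flow $\{\phi_t\}$ is a one-parameter group of automorphisms of $\f$ preserving $\theta$.

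Next I would produce the coupled $\SU(3)$-structure on the leaves, the geometric core of the statement. Let $h\colon N\hookrightarrow M$ be a leaf of $\mathcal F$, so $h^*\theta=0$ and $V$ is the $g_\f$-unit normal along $N$; as in the Preliminaries $\f$ induces the $\SU(3)$-structure $\psi\coloneqq h^*\f$, $\omega\coloneqq h^*(\iota_{\sst V}\f)$. Pulling back $d\f=\theta\W\f$ and using $h^*\theta=0$ gives $d\psi=0$, while Cartan's formula with $\mathcal{L}_{\sst V}\f=0$ gives
\[
d(\iota_{\sst V}\f)=-\iota_{\sst V}(\theta\W\f)=-\sqrt c\,\f+\theta\W\iota_{\sst V}\f,
\]
whose pullback is $d\omega=-\sqrt c\,\psi$. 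Hence each leaf of $\mathcal F$ carries a coupled $\SU(3)$-structure with nonzero coupling constant $-\sqrt c$.

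It remains to organize the leaves into a bundle over $\mathbb S^1$ and to produce an integral Lee form. If the group of periods $\Lambda\coloneqq\{\int_\gamma\theta\mid\gamma\in\pi_1(M)\}$ is discrete, say $\Lambda=\lambda\Z$, then $x\mapsto\int_{x_0}^x\theta\bmod\Lambda$ is a submersion onto $\mathbb S^1=\R/\lambda\Z$, hence a fibre bundle by Ehresmann's theorem, whose fibres are the leaves of $\mathcal F$; the time-$\lambda$ map $\phi_\lambda$ restricts to a monodromy diffeomorphism preserving the coupled structure, exhibiting $M$ as the corresponding mapping torus and, after the evident rescaling, yielding both assertions. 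In general $\Lambda$ may be dense, and then I would invoke Tischler's theorem to replace $\theta$ by a closed $1$-form $\hat\theta$ with integral periods, nowhere vanishing and $C^0$-close to $\theta$. Using $\mathcal{L}_{\sst V}\f=0$ one has the identity $\sqrt c\,\f=\theta\W\sigma-d\sigma$ with $\sigma\coloneqq\iota_{\sst V}\f$; setting $\hat\f\coloneqq c^{-1/2}(\hat\theta\W\sigma-d\sigma)$ one checks directly that $d\hat\f=\hat\theta\W\hat\f$, and since $\hat\f-\f=c^{-1/2}(\hat\theta-\theta)\W\sigma$ is $C^0$-small, $\hat\f$ is still a positive $\G_2$-form. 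Thus $\hat\f$ is LCC with Lee form $\hat\theta$ of integral periods, proving the second assertion, and $\hat\theta$ nowhere vanishing and integral makes $x\mapsto\int_{x_0}^x\hat\theta\bmod\Z$ a submersion $M\to\mathbb S^1$, giving the bundle in the first assertion with fibres the leaves of $\ker\hat\theta$.

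To endow these fibres with a coupled $\SU(3)$-structure I would first arrange $\mathcal{L}_{\sst V}\hat\theta=0$ by averaging $\hat\theta$ over the compact torus $T\coloneqq\overline{\{\phi_t\}}\subseteq\Iso(M,g_\f)$; since $T$ is connected this preserves the (integral) cohomology class and keeps $\hat\theta$ $C^0$-close to $\theta$, while $\mathcal{L}_{\sst V}\sigma=0$ then yields $\mathcal{L}_{\sst V}\hat\f=0$ and $\hat\theta(V)$ a nonzero constant. Repeating the computation of the second paragraph for $\hat\f$ gives $d(\hat h^*\hat\f)=0$ and $d(\hat h^*\iota_{\sst V}\hat\f)=-\hat\theta(V)\,\hat h^*\hat\f$ on each fibre, completing the first assertion. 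The step I expect to be the main obstacle is exactly this globalization in the dense-period case: one must integralize the Lee form while preserving a transverse infinitesimal automorphism (resolved by the torus-averaging), and verify both that $\hat\f$ remains a genuine $\G_2$-structure and that $V$ agrees with the $g_{\hat\f}$-unit normal of the new fibres, so that $(\hat h^*\iota_{\sst V}\hat\f,\hat h^*\hat\f)$ is an honest coupled $\SU(3)$-structure rather than merely a pair of forms satisfying $d\omega=\mathrm{const}\cdot\psi$.
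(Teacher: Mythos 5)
Your setup and the leafwise computation are correct, and your overall route (transverse unit automorphism $V=c^{-1/2}\theta^\sharp$, the identity $\sqrt{c}\,\f=\theta\W\sigma-d\sigma$ with $\sigma=\iota_{\sst V}\f$, Tischler-type perturbation of $\theta$, torus-averaging to keep $\mathcal{L}_{\sst V}$-invariance) is exactly the deformation strategy the paper attributes to \cite{FFR}. But there is a genuine gap at the Tischler step: a closed 1-form $\hat\theta$ with \emph{integral} periods that is $C^0$-close to $\theta$ does not exist in general. If $\gamma$ is a loop with $\int_\gamma\theta=a\notin\mathbb{Z}$, then any integral $\hat\theta$ satisfies $\|\hat\theta-\theta\|_{C^0}\geq \mathrm{dist}(a,\Z)/\mathrm{length}(\gamma)>0$; equivalently, $[\theta]$ sits at a fixed positive distance from the integral lattice in $H^1_{dR}(M)$. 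Tischler only gives \emph{rational} periods $C^0$-closely; clearing denominators destroys closeness, and with $\hat\theta$ far from $\theta$ your form $\hat\f=c^{-1/2}(\hat\theta\W\sigma-d\sigma)$ need not be positive, so both the positivity argument and assertion 2) collapse as written. The repair is to use a rational-period $\hat\theta$ (whose period group is discrete, which already yields the fibration and your mapping-torus picture for assertion 1)), and to obtain assertion 2) not by perturbation but by rescaling the fibre data in the mapping-torus presentation: the homothety $(\omega,\psi)\mapsto(\lambda^2\omega,\lambda^3\psi)$ changes the coupling constant $c$ to $c/\lambda$, so one can normalize the Lee form to $\pm\eta$ with $\eta$ integral. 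This is precisely why the paper stresses that the deformed structure ``has nothing to do with the given one.''

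The second issue is the one you flag yourself at the end, and it is not merely a verification left to the reader: in the dense-period case $V$ is in general \emph{not} $g_{\hat\f}$-orthogonal to $\ker\hat\theta$, and then the pair $\bigl(\hat h^*(\iota_{\sst V}\hat\f),\hat h^*\hat\f\bigr)$ genuinely fails to be an $\SU(3)$-structure. Pointwise, writing $V=a\hat V+w$ with $\hat V$ the $g_{\hat\f}$-unit normal and $w$ tangent to the fibre, one gets $\hat\omega=a\,\omega_0+\iota_{\sst w}\psi_0$, where $(\omega_0,\psi_0)$ is the honest induced structure; since $\iota_{\sst w}\psi_0$ has a nonzero $(2,0)+(0,2)$ part, $\hat\omega\W\hat\psi=\iota_{\sst w}\psi_0\W\psi_0\neq0$ for $w\neq0$ (compatibility $\omega\W\psi=0$ forces $\omega$ to be of type $(1,1)$). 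Conversely, the honest pair $(\omega_0,\psi_0)$ satisfies $d\psi_0=0$ but your argument gives no control on $d\omega_0$, because $\hat V$ is not an infinitesimal automorphism. So the coupled equation and the $\SU(3)$-compatibility are attached to two different pairs, and closing this discrepancy requires an additional idea (it is handled in the proof in \cite{FFR}, which the present paper only cites), not just the torus-averaging you propose.
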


Motivated by the structure results for locally conformal symplectic structures of the first kind obtained in \cite{Ban,BaMa}, we state the following more general problem. 
\begin{question}
What can one say about the structure of a (compact) 7-manifold $M$ endowed with a LCC G$_2$-structure of the first kind? 
\end{question}

We conclude this section mentioning a mild issue related to the above statement. 
In order to prove Theorem \ref{StructResLCC}, one deforms the given LCC $\G_2$-structure to one which gives $M$ the claimed structure of a bundle over $\mathbb{S}^1$ whose fibres are equipped with 
a coupled $\SU(3)$-structure, and the deformed structure has nothing to do with the given one. 
This kind of issue appears also in cosymplectic and in locally conformal symplectic geometry; results similar in spirit to Theorem \ref{StructResLCC} were obtained by Li \cite{Li} in the cosymplectic case,  
and by Banyaga \cite{Ban} in the locally conformal symplectic case. 
A different approach, which does not deform the given structure, was taken in \cite{GMP} for the cosymplectic case and in \cite{BaMa} for the locally conformal symplectic case: 
the same structure result holds, with the given structure, provided that the codimension-one foliation defined by the 1-form $\theta$ has one compact leaf.


\section{Lie algebras with a LCC G$_2$-structure}\label{LieAlgLCC}
We begin this section recalling a few basic facts on Lie algebras, in order to introduce some notations. 
Then, we focus on the construction of Lie algebras admitting a LCC G$_2$-structure,  
and we show a structure result for Lie algebras with an exact LCC G$_2$-structure. 
All Lie algebras considered in this section are assumed to be real. 

\subsection{Rank-one extension of Lie algebras}
Let $\frh$ be a Lie algebra of dimension $n$, denote by $[\cdot,\cdot]_\frh$ its Lie bracket, and by $d_\frh$ the corresponding Chevalley-Eilenberg differential. 
The structure equations of $\frh$ with respect to a basis $(e_1,\ldots,e_n)$ are given by
\[
[e_i,e_j]_\frh = \sum_{k=1}^nc^k_{ij}e_k, \qquad 1\leq i < j \leq n, 
\]
with $c^k_{ij}\in\R$, $c^k_{ij} = -c^k_{ji}$, and $\sum_{r=1}^n\left(c^r_{ij}c^s_{rk}+c^r_{jk}c^s_{ri}+c^r_{ki}c^s_{rj}\right)=0$.  
Equivalently, if $(e^1,\ldots,e^n)$ is the dual basis of  $(e_1,\ldots,e_n)$, then the structure equations of $\frh$ can be written as follows
\[
d_\frh e^k = -\sum_{1\leq i<j\leq n}c^k_{ij} e^i\W e^j, \qquad 1\leq k \leq n. 
\]
A Lie algebra $\frh$ is then described up to isomorphism by the $n$-tuple $(d_\frh e^1,\ldots,d_\frh e^n)$. 

The {\em rank-one extension} of $\frh$ induced by a derivation $D\in\mathrm{Der}(\frh)$ is the $(n+1)$-dimensional Lie algebra given by the vector space $\frh\oplus\R$ endowed with the Lie bracket 
\[
\left[(X,a),(Y,b)\right]\coloneqq \left([X,Y]_\frh + a\,D(Y)-b\,D(X),0\right), 
\]
for all $(X,a),~(Y,b)\in\frh\oplus\R$. We shall denote this Lie algebra by $\frh {\rtimes_D} \R$. 
Moreover, we let $\xi\coloneqq(0,1)$, and we denote by $\eta$ the 1-form on $\frh {\rtimes_D} \R$ such that $\eta(\xi)=1$ and $\eta(X)=0$, for all $X\in\frh$. 
Notice that if $\frh$ is a nilpotent Lie algebra and $D$ is a nilpotent derivation, then $\frh{\rtimes_D}\R$ is nilpotent.

Let $d$ denote the Chevalley-Eilenberg differential on $\frh {\rtimes_D} \R$. 
Using the Koszul formula, it is possible to check that for every $k$-form $\gamma\in\Lambda^k(\frh^*)$ the following identity holds:
\begin{equation}\label{CEd}
d\gamma = d_\frh \gamma +(-1)^{k+1}D^*\gamma\W\eta,
\end{equation}
where the natural action of an endomorphism $A\in\mathrm{End}(\frh)$ on $\Lambda^k(\frh^*)$ is given by 
\[
A^*\gamma(X_1, \ldots, X_k) = \gamma(AX_1,\ldots,X_k) +\cdots+ \gamma(X_1,\ldots,AX_k),
\]
for all $X_1,\ldots,X_k\in\frh$. Moreover, it is clear that $d\eta=0$. 

\subsection{A structure result for Lie algebras with an exact LCC G$_{\mathbf2}$-structure} 
Let $\frh$ be a six-dimensional Lie algebra. A pair $(\omega,\psi)\in\Lambda^2(\frh^*)\times\Lambda^3(\frh^*)$ defines an SU(3)-structure on $\frh$ if there exists a basis $(e^1,\ldots,e^6)$ of $\frh^*$ such that 
\begin{equation}\label{SU3adapted}
\omega  =e^{12}+e^{34}+e^{56},\quad \psi =   e^{135} - e^{146} - e^{236} - e^{245}. 
\end{equation}
We shall call $(e^1,\ldots,e^6)$ an {\em $\SU(3)$-basis} for $(\frh,\omega,\psi)$. 
An SU(3)-structure $(\omega,\psi)$ on $\frh$ is {\em half-flat} if $d_\frh\omega\W\omega=0$ and $d_\frh\psi=0$. 
A half-flat SU(3)-structure satisfying the condition $d_\frh\omega=c\psi$ for some $c\in\R$
is {\em coupled} if $c\neq0$, while it is {\em symplectic half-flat} if $c=0$.  

Similarly, a 3-form $\f$ on a seven-dimensional Lie algebra $\frg$ defines a G$_2$-structure if there is a basis $(e^1,\ldots,e^7)$ of $\frg^*$ such that 
\[
\f = e^{127} + e^{347}+ e^{567} + e^{135} - e^{146} - e^{236} - e^{245}. 
\]
We shall refer to $(e^1,\ldots,e^7)$ as a {\em $\G_2$-basis} for $(\frg,\f)$. 
A G$_2$-structure $\f$ is {\em closed} if $d_\frg\f=0$, while it is {\em locally conformal closed} ({\em LCC}\,) if $d_\frg\f=\theta\W\f$ for some 1-form $\theta\in\Lambda^1(\frg^*)$ with $d_\frg\theta=0$. 

If $\frh {\rtimes_D} \R$ is the rank-one extension of a six-dimensional Lie algebra $\frh$ endowed with an SU(3)-structure $(\omega,\psi)$, then it admits a G$_2$-structure defined by the 3-form 
\[
\f = \omega \W \eta +\psi.
\]
Indeed, if $(e^1,\ldots,e^6)$ is an SU(3)-basis for $(\frh,\omega,\psi)$, then  $(e^1,\ldots,e^6,e^7)$ with $e^7\coloneqq\eta$ is a G$_2$-basis for $(\frh {\rtimes_D} \R,\f)$. 

In the next proposition, we collect some conditions guaranteeing the existence of a LCC G$_2$-structure on the rank-one extension of a six-dimensional Lie algebra. 
For the sake of convenience, from now on we shall denote the Chevalley-Eilenberg differential on seven-dimensional Lie algebras simply by $d$. 

\begin{proposition}\label{PropCpdLCC}
Let $\frh$ be a six-dimensional Lie algebra endowed with a coupled $\SU(3)$-structure $(\omega,\psi)$ with $d_\frh\omega=c\psi$, and consider the rank-one extension $\frh {\rtimes_D} \R$, $D\in\mathrm{Der}(\frh)$,  
endowed with the $\G_2$-structure  $\f \coloneqq \omega\W\eta+\psi$.  Then: 
\begin{enumerate}[i)]
\item\label{prop1} $\f$ is LCC with Lee form $\theta = a\eta$, for some $a\in\R$, if and only if $D^*\psi = -(a+c)\psi$. 
In particular, it is closed if and only if $D^*\psi = -c\,\psi$;
\item\label{prop2} if $D^*\omega=\mu\omega$ with $\mu\neq-c$, then $\f$ is $d_{(-(c+\mu)\eta)}$-exact. 
Moreover, it is of the first kind if and only if $\mu=0$.
\end{enumerate}
\end{proposition}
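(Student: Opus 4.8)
The plan is to reduce everything to the extension formula \eqref{CEd}, which on $\frh\rtimes_D\R$ reads $d\gamma=d_\frh\gamma+(-1)^{k+1}D^*\gamma\W\eta$ for $\gamma\in\Lambda^k(\frh^*)$, together with $d\eta=0$ and the coupled condition $d_\frh\omega=c\psi$ (so in particular $d_\frh\psi=0$). First I would compute the differential of $\f=\omega\W\eta+\psi$. Using \eqref{CEd} one finds $d(\omega\W\eta)=d\omega\W\eta=c\,\psi\W\eta$ (the term $D^*\omega\W\eta\W\eta$ vanishing) and $d\psi=D^*\psi\W\eta$, whence
\[
d\f=(c\,\psi+D^*\psi)\W\eta.
\]
On the other hand, for $\theta=a\eta$ one computes $\theta\W\f=a\,\eta\W\psi=-a\,\psi\W\eta$, since $\eta\W\omega\W\eta=0$. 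As $\cdot\W\eta$ is injective on $\Lambda^3(\frh^*)$, the equation $d\f=\theta\W\f$ is equivalent to $c\,\psi+D^*\psi=-a\,\psi$, i.e.\ $D^*\psi=-(a+c)\psi$; taking $a=0$ gives the closed case. This proves \eqref{prop1}.

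For \eqref{prop2} I would first turn the hypothesis $D^*\omega=\mu\omega$ into a statement about $\psi$. Since $D$ is a derivation of $\frh$, the induced action $D^*$ commutes with $d_\frh$; applying $D^*d_\frh=d_\frh D^*$ to $\omega$ and using $d_\frh\omega=c\psi$ gives $c\,D^*\psi=\mu\,d_\frh\omega=\mu c\,\psi$, hence $D^*\psi=\mu\psi$ (recall $c\neq0$). By part \eqref{prop1} with $-(a+c)=\mu$, the form $\f$ is LCC with Lee form $\theta=-(c+\mu)\eta$. To see it is $d_\theta$-exact I would exhibit a primitive directly: a computation with \eqref{CEd} shows
\[
d_\theta\!\left(\tfrac1c\,\omega\right)=d\!\left(\tfrac1c\,\omega\right)-\theta\W\tfrac1c\,\omega=\psi+\omega\W\eta=\f,
\]
the two $\omega\W\eta$ terms cancelling precisely because the coefficient of $\theta$ is $-(c+\mu)$. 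This establishes the $d_{(-(c+\mu)\eta)}$-exactness.

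It remains to decide when $\f$ is of the first kind. The natural transverse field is $\xi$: since $\iota_{\sst\xi}\f=\omega$, the primitive above is $\tfrac1c\,\omega=\iota_{\sst X_0}\f$ with $X_0=\tfrac1c\,\xi$, so $\f=d_\theta(\iota_{\sst X_0}\f)$. As $\theta(X_0)=-\tfrac{c+\mu}{c}$, Proposition~\ref{exactfirstK} shows that $X_0$ is an infinitesimal automorphism with $\theta(X_0)=-1$ exactly when $\mu=0$, in which case $\f$ is of the first kind. Concretely, a short calculation gives $\mathcal{L}_{\sst\xi}\f=-\mu\f$, which recovers this and shows that for $\mu\neq0$ the field $\xi$ fails to be an automorphism.

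The delicate point — and the step I expect to be the main obstacle — is the converse for $\mu\neq0$: one must rule out \emph{every} infinitesimal automorphism transverse to $\ker\theta=\frh$, not merely $\xi$. I would argue by a trace/volume identity. Since $c+\mu\neq0$, any transverse automorphism normalises to $Y=W+\xi$ with $W\in\frh$ and $\mathcal{L}_{\sst Y}\f=0$, which forces $\mathcal{L}_{\sst W}\f=\mu\f$ by the value of $\mathcal{L}_{\sst\xi}\f$. A $\G_2$-structure determines its volume, with $\mathcal{L}_{\sst Z}\f=\lambda\f$ implying $\mathcal{L}_{\sst Z}dV_\f=\tfrac73\lambda\,dV_\f$, and for left-invariant fields $\mathcal{L}_{\sst Z}dV_\f$ is governed by $\tr(\ad_Z)$. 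I would then compare two evaluations of the divergence: from $Y\in\aut$ one gets $\tr(\ad_Y)=0$, i.e.\ $\tr(\ad^\frh_W)+\tr D=0$, while $\tr D=3\mu$ (because $D^*\omega=\mu\omega$ gives $D^*(\omega^3)=3\mu\,\omega^3$, and $D^*$ acts on the top power of $\frh^*$ by $\tr D$); from $\mathcal{L}_{\sst W}\f=\mu\f$ one instead gets $\tr(\ad^\frh_W)=\pm\tfrac73\mu$. Combining these yields an identity $(\,\text{const}\neq0\,)\cdot\mu=0$ — here the specific exponent $\tfrac73$ is what keeps the constant from degenerating — forcing $\mu=0$ and hence the second kind. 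Pinning down the sign in the divergence formula and the metric-scaling exponent is the part that needs the most care.
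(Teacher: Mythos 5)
Your proof is correct. Part i) and the exactness half of ii) coincide with the paper's own argument: the same expansion of $d\f$ via \eqref{CEd}, the same derivation of $D^*\psi=\mu\psi$ from $D^*d_\frh=d_\frh D^*$, and the same primitive $\tfrac1c\,\omega=\iota_{\sst\xi/c}\f$.

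The genuine divergence is the ``only if'' of the first-kind criterion. The paper disposes of it in one line by applying Proposition~\ref{exactfirstK} to the single vector $\xi/c$; read literally, that only shows that $\xi/c$ fails to witness first-kindness when $\mu\neq0$, and the possibility of some \emph{other} transverse infinitesimal automorphism is exactly what you isolate and rule out. Your trace argument does work, and the signs you flag as delicate are harmless: with the convention $\mathcal{L}_{\sst Z}\,dV_\f=-\tr(\ad_Z)\,dV_\f$ for left-invariant $Z$, the automorphism $Y=W+\xi$ is divergence-free, so $\tr(\ad^{\frh}_W)+\tr D=0$ with $\tr D=3\mu$ (from $D^*\omega^3=3\mu\,\omega^3$), while $\mathcal{L}_{\sst W}\f=\mu\f$ gives $\tr(\ad^{\frh}_W)=-\tfrac73\mu$; since $\tfrac73\neq\pm3$, either sign convention forces $\mu=0$. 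What you did not notice, however, is that your own identities, applied to $\xi$ itself rather than to a hypothetical automorphism, show that the case you are guarding against never occurs: $\mathcal{L}_{\sst\xi}\f=-\mu\f$ together with $\tr(\ad_\xi)=\tr D=3\mu$ yields $3\mu=\tfrac73\mu$, hence $\mu=0$. Equivalently, purely pointwise: $D^*\omega=\mu\omega$ gives $\tr D=3\mu$ from the cubic $\omega^3$, while $D^*\psi=\mu\psi$ implies $D^*\hat\psi=\mu\hat\psi$ by naturality of the Hitchin dual, so the nonzero $6$-form $\psi\W\hat\psi$ gives $\tr D=2\mu$; hence a derivation of a coupled structure satisfying $D^*\omega=\mu\omega$ automatically has $\mu=0$ --- consistent with the fact that the paper's only example invoking ii), Example~\ref{ex2}, has $D^*\omega=0$. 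So the paper's terse treatment of the converse is vacuously complete, and your extra argument, while correct and more honest about the quantifier hidden in ``of the first kind'', defends against a case that cannot arise.
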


\begin{proof}
Using \eqref{CEd}, we see that the G$_2$-structure $\f=\omega\W\eta+\psi$ is LCC with Lee form $\theta = a\eta$ if and only if
\[
a\eta\W\psi = a\eta\W\f = d(\omega\W\eta+\psi) = d_\frh\omega\W\eta +d_\frh\psi +D^*\psi\W\eta = \left(c\psi+D^*\psi\right)\W\eta. 
\]
From this, \ref{prop1}) follows.

As for \ref{prop2}), we first observe that the hypothesis $D^*\omega=\mu\omega$ implies 
\[
D^*\psip = \frac{1}{c}\, D^* d_\frh \omega = \frac{1}{c}\, d_\frh D^* \omega = \frac{\mu}{c}\, d_\frh\omega = \mu \psi. 
\]
Thus, $\f$ is LCC with Lee form $\theta = -(c+\mu)\eta$ by point \ref{prop1}). Moreover, 
\[
d\omega = d_\frh\omega - D^*\omega\W\eta  = c\psi - \mu\omega\W\eta. 
\]
Consequently, we get
\[
\f = \omega\W\eta+\psi = \omega\W\eta +\frac{1}{c}\left(d\omega + \mu\omega\W\eta\right) = d\left(\frac{\omega}{c} \right) + (c+\mu)\eta\W\frac{\omega}{c}. 
\]
Hence, $\f=d_{(-(c+\mu)\eta)}\left(\frac{\omega}{c}\right)$ is exact. Notice that $\frac{\omega}{c} = \iota_{\sst \frac{\xi}{c}}\f\in\Lambda^2_7((\frh {\rtimes_D} \R)^*)$. 
Therefore, according to Proposition \ref{exactfirstK}, $\f$ is of the first kind if and only if 
\[
0 = \theta\left(\frac{\xi}{c}\right) +1 = -(c+\mu)\,\eta\left(\frac{\xi}{c}\right) +1 =-\frac{\mu}{c}. 
\]
\end{proof}

\begin{remark} \ 
\begin{enumerate}[1)]
\item Proposition \ref{PropCpdLCC} generalizes some results obtained by the second author in the joint works \cite{FFR,FiRa1}. 
In detail, \cite[Prop.~5.1]{FFR} corresponds to point \ref{prop1}) with $a=-c$, while \cite[Prop.~4.2]{FiRa1} corresponds to point \ref{prop1}) with $a=c$.
\item When the SU(3)-structure $(\omega,\psi)$ on $\frh$ is symplectic half-flat and $D\in\mathrm{Der}(\frh)$ satisfies $D^*\psi=0$, then $\f = \omega\W\eta+\psi$ is a closed G$_2$-structure on $\frh {\rtimes_D} \R$ 
by point \ref{prop1}). This was already observed by Manero in  \cite[Prop.~1.1]{Man}.  
\item Recall that for a six-dimensional Lie algebra $\frh$ endowed with an SU(3)-structure $(\omega,\psi)$ the following isomorphisms hold: 
\[
\left\{A\in\mathrm{End}(\frh) \st A^*\omega=0 \right\} \cong \mathfrak{sp}(6,\R),\quad\left\{A\in\mathrm{End}(\frh) \st A^*\psi=0 \right\} \cong \mathfrak{sl}(3,\C)\subset \mathfrak{gl}(6,\R).
\]
In particular, if $(\omega,\psi)$ is coupled and $A^*\omega=0$, then $A\in\mathfrak{sp}(6,\R)\cap\mathfrak{sl}(3,\C) = \mathfrak{su}(3)$. \end{enumerate}
\end{remark}

The next result is the converse of point \ref{prop2}) of Proposition \ref{PropCpdLCC}. 
\begin{proposition}\label{PropLCCCpd}
Let $\frg$ be a seven-dimensional Lie algebra endowed with an exact LCC $\G_2$-structure $\f = d\sigma-\theta\W\sigma$, where $\theta\in\Lambda^1(\frg^*)$ is closed and $\sigma\in\Lambda^2_7(\frg^*)$. 
Assume that the non-zero vector $X\in\frg$ for which $\sigma = \iota_{\sst X}\f$ satisfies $\theta(X)\neq0$. 
Then, $\frg$ splits as a $g_\f$-orthogonal direct sum $\frg=\frh \oplus \R$, where  $\R=\langle X\rangle$ and $\frh\coloneqq \ker(\theta)$ is a six-dimensional ideal endowed with a coupled $\SU(3)$-structure 
$(\omega,\psi)$ induced by $\f$. Moreover, there is a derivation $D\in\mathrm{Der}(\frh)$ such that $D^*\omega=-\left(1+\theta(X)\right)\omega$, and $\frg\cong \frh {\rtimes_D}\R$.
\end{proposition}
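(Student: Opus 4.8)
The plan is to invert the construction of Proposition \ref{PropCpdLCC}: from the exact LCC datum I want to recover the six-dimensional fibre, the coupled $\SU(3)$-structure and the defining derivation. I would begin with the purely Lie-theoretic part. Since $d\theta=0$, the Chevalley--Eilenberg relation $d\theta(Y,Z)=-\theta([Y,Z])$ gives $[\frg,\frg]\subseteq\ker\theta$, so $\frh:=\ker\theta$ is an ideal; it is six-dimensional because $\theta\neq0$ (as $\theta(X)\neq0$). Because $X\notin\frh$, we obtain a vector-space splitting $\frg=\frh\oplus\langle X\rangle$, and I record for later use that $\f=d_\theta\sigma$ with $\sigma=\iota_X\f$ is equivalent to $d\sigma=\f+\theta\W\sigma$.

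The crux --- and the step I expect to be the main obstacle --- is to promote this to a $g_\f$-orthogonal splitting, i.e.\ to prove $X\perp_{g_\f}\frh$, equivalently that $X^\flat$ is a multiple of $\theta$. For this I would exploit the pointwise $\G_2$ linear algebra rather than formal identities: the relation $\iota_X\f\W*_\f\f=3\,*_\f X^\flat$ expresses $X^\flat=\tfrac13*_\f(\sigma\W*_\f\f)$ purely through $\sigma$, and I would try to match its annihilated hyperplane with $\ker\theta$ using the Lee-form formula $\theta=-\tfrac14*_\f(*_\f d\f\W\f)$ together with $d\f=\theta\W\f$. The subtle point is that $\sigma$ determines $X$ only through the isomorphism $\Lambda^2_7(\frg^*)\cong\frg$, so orthogonality is really a compatibility between the $2$-form $\sigma$ and the hyperplane $\ker\theta$ and seems to genuinely require the $\mathcal{W}_2\oplus\mathcal{W}_4$ torsion type of $\f$; this is where I would spend most of the effort.

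Granting orthogonality, the reduction is straightforward. Setting $V:=X/|X|$ for the unit normal, the hypersurface correspondence recalled in the Preliminaries endows $\frh$ with the induced $\SU(3)$-structure $\omega:=h^*(\iota_V\f)$ and $\psi:=h^*\f$, where $h:\frh\hookrightarrow\frg$; note that $\omega$ is the positive multiple $\sigma/|X|$ of $\sigma$. Pulling back the structure equations by $h$, and using $h^*\theta=0$, gives at once $d_\frh\psi=h^*(\theta\W\f)=0$ and $d_\frh\sigma=h^*(\f+\theta\W\sigma)=\psi$, whence $d_\frh\omega=c\,\psi$ with $c=1/|X|\neq0$; together with $d_\frh\omega\W\omega=c\,\psi\W\omega=0$ this shows $(\omega,\psi)$ is coupled.

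Finally I would identify the extension and compute the torsion of $D$. As $\frh$ is an ideal and $X\notin\frh$, the Jacobi identity makes $D:=\ad_X|_\frh$ a derivation of $\frh$, and $(Y,t)\mapsto Y+tX$ is an isomorphism $\frh\rtimes_D\R\xrightarrow{\sim}\frg$ whose distinguished $1$-form $\eta$ satisfies $\theta=\theta(X)\,\eta$. Applying \eqref{CEd} to $\sigma\in\Lambda^2(\frh^*)$ yields $d\sigma=d_\frh\sigma-D^*\sigma\W\eta$; contracting with $X$ and using $\iota_X\sigma=0$, $\eta(X)=1$ gives $D^*\sigma=-\iota_X d\sigma$, while $d\sigma=\f+\theta\W\sigma$ gives $\iota_X d\sigma=(1+\theta(X))\sigma$. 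Hence $D^*\sigma=-(1+\theta(X))\sigma$, and since $\omega$ is a scalar multiple of $\sigma$ we conclude $D^*\omega=-(1+\theta(X))\omega$, as claimed.
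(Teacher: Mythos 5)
The parts you actually write out coincide with the paper's proof: $\frh=\ker(\theta)$ is a six-dimensional ideal because $\theta$ is closed and non-zero, $D=\ad_X|_\frh$ is a derivation giving $\frg\cong\frh{\rtimes_D}\R$, restricting the structure equation to $\frh$ gives $d_\frh\omega=c\,\psi$ with $c=\pm|X|^{-1}$ (the paper keeps a sign $\varepsilon$ that your choice $V=X/|X|$ suppresses), and your contraction of \eqref{CEd} with $X$ reproduces exactly the paper's Koszul-formula computation $D^*\sigma=-\iota_{\sst X}d\sigma=-(1+\theta(X))\sigma$. The genuine gap is the step you yourself flag as the crux and then only sketch: the $g_\f$-orthogonality of $\frg=\frh\oplus\langle X\rangle$ is \emph{granted}, never proved, and it is precisely what legitimizes invoking the hypersurface correspondence to get an $\SU(3)$-structure on $\frh$. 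For comparison, the paper disposes of it in one line: from $\theta(\theta^\sharp)=|\theta|^2\neq0$ it concludes $\theta^\sharp\in\langle X\rangle$, hence $g_\f(H,X)=0$ for all $H\in\frh$.

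Worse, the route you announce for closing the gap --- extracting $X^\flat\propto\theta$ from pointwise identities such as $X^\flat=\frac13*_\f(\sigma\W*_\f\f)$ together with the Lee-form formula and the torsion type --- cannot succeed as stated, because orthogonality is not a formal consequence of the hypotheses: they do not pin down $X$. Indeed, if $Z\in\ker(\theta)$ is central in $\frg$ (so that $\mathcal{L}_{\sst Z}\f=0$), then by the identity in Proposition \ref{exactfirstK} one has $d_\theta(\iota_{\sst X+Z}\f)=\f$ as well, with $\iota_{\sst X+Z}\f\in\Lambda^2_7(\frg^*)$ and $\theta(X+Z)=\theta(X)\neq0$, yet $g_\f(X+Z,Z)=|Z|^2\neq0$, so $X+Z$ is not orthogonal to $\frh$. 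This actually occurs: in the Lie algebra of Example \ref{ex2} take $X=-e_7$ and $Z=e_5$. Hence any argument using only the stated hypotheses would prove something false for the representative $X+Z$; a complete proof must either single out a good representative (e.g.\ show that one may replace $X$ by a multiple of $\theta^\sharp$, adjusting $\sigma$ accordingly) or supply the reasoning behind the paper's assertion $\theta^\sharp\in\langle X\rangle$ --- note that $\theta(\theta^\sharp)\neq0$ by itself only yields $\theta^\sharp\notin\frh$, so even the paper is extremely terse exactly here. Your instinct that all the difficulty is concentrated in this step is therefore sound, but your proposal neither resolves it nor could do so along the announced lines.
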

\begin{proof}
It is clear that $\frh\coloneqq\mathrm{ker}(\theta)$ is a six-dimensional ideal of $\frg$, as $\theta\in\Lambda^1(\frg^*)$ is non-zero and closed. 
Since $\theta(X)\neq0$, we see that the vector space $\frg$ decomposes into the direct sum $\frg = \frh \oplus \R$, with  $\R=\langle X\rangle$. 
The $\R$-linear map 
\[
D:\frh\rightarrow\frh,\quad H \mapsto [X,H],
\]
is well-defined, as $d\theta=0$, and it is a derivation of $\frh$ by the Jacobi identity. From this it is easy to see that $\frg\cong \frh{\rtimes_D}\R$ as a Lie algebra. 

Let $\theta^\sharp\in\frg$ be the $g_\f$-dual vector of $\theta$. By definition, $\theta(\theta^\sharp) = g_\f(\theta^\sharp,\theta^\sharp) = |\theta|^2\neq0$. 
Thus, $\theta^\sharp\in\langle X\rangle\subset \frg$ and the decomposition $\frg = \frh \oplus \R$ is $g_\f$-orthogonal, i.e.,  $g_\f(H,X)=0$ for all $H\in\frh$. 
Consequently, depending on the choice of a unit vector $\varepsilon\frac{X}{|X|}\in\langle X\rangle$, with $\varepsilon\in\{\pm1\}$, the ideal $\frh$ admits an SU(3)-structure defined by the pair
\[
\omega\coloneqq\left.\left(\iota_{\varepsilon\sst\frac{X}{|X|}}\f\right)\right|_\frh, \quad \psi\coloneqq \left.\f\right|_\frh, 
\]  
Notice that $\omega = \left.\varepsilon|X|^{-1}\sigma\right|_\frh = \varepsilon|X|^{-1}\sigma$, as $\iota_{\sst X}\sigma=0$. 
We claim that $(\omega,\psi)$ is coupled with coupling constant $c=\varepsilon|X|^{-1}.$ First, observe that for all $H_1,H_2,H_3\in\frh$ we have
\[
\psi(H_1,H_2,H_3) = (d\sigma-\theta\W\sigma)(H_1,H_2,H_3) = d\sigma(H_1,H_2,H_3) = d_\frh\sigma(H_1,H_2,H_3). 
\]
Therefore, $d_\frh\omega = \varepsilon|X|^{-1}\psi$ and the claim is proved.  
Let us now determine the expression of $(D^*\sigma)|_\frh$, from which we will deduce the expression of $D^*\omega$. For all $H_1,H_2\in\frh$, we have 
\[
D^*\sigma(H_1,H_2)	 = \sigma([X,H_1],H_2) - \sigma([X,H_2],H_1) = -d\sigma(X,H_1,H_2) = -(\iota_{\sst X}d\sigma)(H_1,H_2), 
\]
where the second equality follows from Koszul formula and the condition $\iota_{\sst X}\sigma=0$. 
Since $\f = d\sigma-\theta\W\sigma$, on $\frh$ we have
\[
D^*\sigma = -\iota_{\sst X}d\sigma = -\iota_{\sst X}(\f+\theta\W\sigma)  = -\left(1+\theta(X)\right)\sigma.
\]
Thus, 
\[
D^*\omega = \varepsilon|X|^{-1}D^*\sigma = -\left(1+\theta(X)\right)\omega. 
\]
\end{proof}

Combining propositions \ref{PropCpdLCC} and \ref{PropLCCCpd}, we obtain the following analogue of \cite[Thm.~1.4]{ABP} for exact locally conformal symplectic Lie algebras. 
\begin{theorem}\label{ThmLCCex}
There is a one-to-one correspondence between seven-dimensional Lie algebras $\frg$ admitting an exact $\G_2$-structure of the form $\f=d\sigma-\theta\W\sigma$, with $\sigma=\iota_{\sst X}\f\in\Lambda^2_7(\frg^*)$ 
and $\theta(X)\neq0$, and six-dimensional Lie algebras $\frh$ endowed with a coupled $\SU(3)$-structure $(\omega,\psi)$, with coupling constant $c$, and a derivation $D\in\mathrm{Der}(\frh)$ 
such that $D^*\omega=\mu\omega$, for some $\mu\neq-c$. 
\end{theorem}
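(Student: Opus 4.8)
The plan is to read Propositions \ref{PropCpdLCC} and \ref{PropLCCCpd} as two constructions going in opposite directions and to check that, up to the natural identifications, they invert one another. In one direction, to a six-dimensional $\frh$ with a coupled $\SU(3)$-structure $(\omega,\psi)$ of coupling constant $c$ and a derivation $D$ with $D^*\omega=\mu\omega$, $\mu\neq-c$, I associate the rank-one extension $\frh\rtimes_D\R$ with the $\G_2$-structure $\f=\omega\W\eta+\psi$. By point \ref{prop2}) of Proposition \ref{PropCpdLCC} this $\f$ is exact of the required shape, $\f=d\sigma-\theta\W\sigma$ with $\theta=-(c+\mu)\eta$ and $\sigma=\frac{\omega}{c}=\iota_{\xi/c}\f\in\Lambda^2_7$, so the distinguished vector is $X=\xi/c$, and $\theta(X)=-1-\frac{\mu}{c}$ is nonzero precisely because $\mu\neq-c$. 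In the reverse direction, to a seven-dimensional $\frg$ carrying such an exact structure, Proposition \ref{PropLCCCpd} assigns the ideal $\frh=\ker(\theta)$ with its induced coupled $\SU(3)$-structure and the derivation $D=\ad_X|_\frh$, together with an isomorphism $\frg\cong\frh\rtimes_D\R$.

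First I would record the two assignments and then verify the round trips. Going from the six-dimensional side to the seven-dimensional side and back, $\ker(\theta)$ returns $\frh$ because $\theta$ is a multiple of $\eta$; the pullbacks $\psi=\f|_\frh$ and $\omega=\varepsilon|X|^{-1}\sigma|_\frh$ return the original forms once the unit normal is oriented by $\varepsilon=\sgn(c)$ and one substitutes $|X|=|c|^{-1}$, and the coupling constant comes back as $c$. Going the other way, I would use the isomorphism $\frg\cong\frh\rtimes_D\R$ of Proposition \ref{PropLCCCpd} and check that the canonical $\G_2$-form $\omega\W\eta+\psi$ on the extension pulls back to $\f$. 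The matching of the two defining inequalities needs no separate work: it is the identity $\theta(X)=-1-\frac{\mu}{c}$ above, which gives $\theta(X)\neq0\Leftrightarrow\mu\neq-c$.

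The main obstacle is the consistent bookkeeping of the coupling constant through these steps. The vector $X$ singled out by $\sigma=\iota_X\f$ is not the obvious generator $\xi$ of the extension but its rescaling $\xi/c$, so the derivation read off on the $\G_2$-side is $\ad_X=\frac{1}{c}\ad_\xi$ rather than $\ad_\xi$, and the norm $|X|=|c|^{-1}$ and the orientation sign $\varepsilon$ must be tracked at every stage. The point to get right is that this residual rescaling of the transverse direction is itself a Lie algebra isomorphism of the extension, so it does not affect the correspondence; once the factors of $c$ are carried consistently, so that $(\omega,\psi)$, the constant $c$, and the eigenvalue $\mu$ are all returned, the bijection follows formally by combining the two propositions.
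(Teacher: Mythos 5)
Your architecture is the paper's own: Theorem \ref{ThmLCCex} is proved there precisely by combining point \ref{prop2}) of Proposition \ref{PropCpdLCC} with Proposition \ref{PropLCCCpd}, and your forward direction (with $X=\xi/c$, $\theta(X)=-1-\mu/c$, hence $\theta(X)\neq0\Leftrightarrow\mu\neq-c$) is correct and matches it. The genuine gap is in the step you yourself flag as the main obstacle. Your reverse assignment takes $D=\ad_X|_\frh$ verbatim from Proposition \ref{PropLCCCpd}, whose conclusion is $D^*\omega=-\left(1+\theta(X)\right)\omega$. Thus the recovered eigenvalue satisfies $\mu'\neq-1$ exactly when $\theta(X)\neq0$, whereas membership in the target class of Theorem \ref{ThmLCCex} requires $\mu'\neq-c'$ with $c'=\varepsilon|X|^{-1}$, and these conditions differ unless $c'=1$. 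Indeed, on your own forward-then-back round trip the recovered pair is $(D/c,\mu/c)$ with $c'=c$, so the required inequality becomes $\mu\neq-c^{2}$, which is not implied by $\mu\neq-c$. So your claim that the matching of the two defining inequalities ``needs no separate work'' is false under your normalization: the identity $\theta(X)=-1-\mu/c$ is a forward-direction identity, and it is exactly what fails to be restored on the way back (Proposition \ref{PropLCCCpd} gives $\theta(X)=-1-\mu'$, not $-1-\mu'/c'$).

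The repair you propose does not close this gap: the rescaling $\xi'\mapsto\xi/c$ is indeed a Lie algebra isomorphism $\frh{\rtimes_{D/c}}\R\cong\frh{\rtimes_D}\R$, but it does not intertwine the canonical $\G_2$-forms, since pulling back gives $\frac1c\,\omega\W\eta'+\psi$ rather than $\omega\W\eta'+\psi$; hence the structured objects are not identified and the bijection does not ``follow formally''. The correct bookkeeping is to anchor the reverse map at the unit normal rather than at $X$: since $c=\varepsilon|X|^{-1}$, the vector $\tilde\xi\coloneqq cX=\varepsilon X/|X|$ has norm one (it is precisely the unit vector used in the proof of Proposition \ref{PropLCCCpd} to define $\omega$), and one should set $\tilde D\coloneqq\ad_{\tilde\xi}|_\frh=c\,\ad_X|_\frh$. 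Then $\tilde D^*\omega=-c\left(1+\theta(X)\right)\omega$, so $\tilde\mu=-c\left(1+\theta(X)\right)$ and $\tilde\mu\neq-c\Leftrightarrow\theta(X)\neq0$ with no further hypothesis; moreover, under the isomorphism $\frg\cong\frh{\rtimes_{\tilde D}}\R$ determined by $\tilde\xi$ one has $\f=\omega\W\tilde\eta+\psi$ on the nose, and both round trips now close up exactly: $c\,\ad_{\xi/c}|_\frh=\ad_\xi|_\frh=D$ and $\tilde\mu=\mu$. With this single correction your argument goes through and coincides with the paper's intended proof, which leaves this normalization implicit.
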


According to a result of Dixmier (see \cite[Th\'eor\`eme 1]{Dixmier}), the Lichnerowicz cohomology of a nilpotent Lie algebra with respect to any closed 1-form vanishes. 
Hence, every LCC $\G_2$-structure on a seven-dimensional nilpotent Lie algebra is exact. 
We can use this observation to show that among the seven-dimensional non-Abelian nilpotent Lie algebras admitting closed G$_2$-structures (see \cite{CoFe} for the classification) there is no one 
where LCC $\G_2$-structures exist. 
\begin{proposition}\label{CnoLCC}
None of the seven-dimensional non-Abelian nilpotent Lie algebras admitting closed $\G_2$-structures admits LCC $\G_2$-structures. 
\end{proposition}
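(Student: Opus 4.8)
The plan is to argue by contradiction. Since every closed $\G_2$-structure is trivially LCC with $\theta=0$, the real content of the statement is that none of the listed algebras admits an LCC $\G_2$-structure with \emph{non-vanishing} Lee form, and this is what I would prove. So suppose that a seven-dimensional non-Abelian nilpotent Lie algebra $\frg$ from the classification of \cite{CoFe} carries an LCC $\G_2$-structure $(\f,\theta)$ with $\theta\neq0$. By the result of Dixmier \cite{Dixmier} recalled above, the Lichnerowicz cohomology of $\frg$ vanishes for every closed $1$-form, so the $d_\theta$-closed form $\f$ is $d_\theta$-exact: $\f=d_\theta\sigma=d\sigma-\theta\W\sigma$ for some $\sigma\in\Lambda^2(\frg^*)$.

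Next I would extract the structural consequences. Because $\theta$ is closed and non-zero, $\frh\coloneqq\ker(\theta)$ is a six-dimensional nilpotent ideal; writing $V\coloneqq\theta^\sharp/|\theta|$ for the $g_\f$-unit normal and $\eta\coloneqq V^\flat=\theta/|\theta|$, one has $\theta=s\,\eta$ with $s=|\theta|$ constant, hence $d\eta=0$, and $\frg\cong\frh\rtimes_D\R$ with nilpotent derivation $D=\mathrm{ad}_V|_\frh$. The $\G_2$-form induces on $\frh$ the pair $\omega\coloneqq(\iota_{\sst V}\f)|_\frh$, $\psi\coloneqq\f|_\frh$. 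Restricting $d\f=\theta\W\f$ to $\frh$ gives $d_\frh\psi=0$, while the extension formula \eqref{CEd} turns the LCC equation into $d_\frh\omega=-s\,\psi-D^*\psi$. Finally, restricting $\f=d\sigma-\theta\W\sigma$ to $\frh$ and using $\theta|_\frh=0$ yields $\psi=d_\frh(\sigma|_\frh)$, so the stable $3$-form $\psi$ is \emph{exact} on $\frh$. (If $\f$ happens to be of the first kind, Proposition~\ref{PropLCCCpd} sharpens this to a \emph{coupled} $\SU(3)$-structure on $\frh$, i.e.\ $d_\frh\omega=c\,\psi$.)

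The heart of the matter is then to show that a six-dimensional nilpotent Lie algebra cannot support this configuration: a stable closed $3$-form $\psi$ that is simultaneously exact, together with $d_\frh\omega=-s\,\psi-D^*\psi$ for a nilpotent $D$. This is precisely where I expect the difficulty to lie, since closed stable $3$-forms, and even half-flat structures, do occur on nilpotent Lie algebras; it is the \emph{exactness} of $\psi$, combined with the precise shape of $d_\frh\omega$, that must be incompatible with nilpotency (the obvious unimodularity argument, that the volume $\psi\W\widehat\psi$ is non-exact, turns out to be self-consistent and does not suffice). The practical route to the contradiction is the classification \cite{CoFe}: only finitely many seven-dimensional non-Abelian nilpotent Lie algebras carry a closed $\G_2$-structure, and for each of them the closed $1$-forms form the explicit annihilator of $[\frg,\frg]$. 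For every such $\frg$ I would let $\theta$ range over this finite-dimensional space, use the reduction above to pin down $\frh=\ker\theta$, the admissible derivation $D$, and the exactness of $\psi$, and verify that the equation $d_\frh\omega=-s\,\psi-D^*\psi$ admits no solution with $s\neq0$ and $\f$ a genuine positive $3$-form. Running through the finite list in this way shows that none of these algebras admits an LCC $\G_2$-structure with non-vanishing Lee form, which is the assertion.
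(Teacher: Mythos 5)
Your proposal correctly reproduces the paper's first step (Dixmier's theorem forces any LCC structure with $\theta\neq0$ on a nilpotent Lie algebra to be $d_\theta$-exact, $\f=d\sigma-\theta\W\sigma$), and your dimensional reduction is sound as far as it goes: $\frh=\ker\theta$ is an ideal, $\frg\cong\frh{\rtimes_D}\R$ with $D$ nilpotent, $d_\frh\psi=0$, $d_\frh\omega=-s\,\psi-D^*\psi$, and $\psi=d_\frh(\sigma|_\frh)$ is exact on $\frh$. But the proof then stops exactly where the Proposition lives. The sentence ``verify that the equation $d_\frh\omega=-s\,\psi-D^*\psi$ admits no solution with $s\neq0$ and $\f$ a genuine positive $3$-form'' is not an argument; it is a restatement of the claim, and you yourself flag it as ``precisely where I expect the difficulty to lie.'' No mechanism is offered for producing the contradiction, and your reduction makes the verification \emph{harder}, not easier: since $(\omega,\psi)$ need not be coupled (the $D^*\psi$ term is unconstrained), you cannot invoke the classification of coupled structures on nilpotent algebras, and the isomorphism type of $\frh=\ker\theta$, the admissible nilpotent derivations $D$, and the exact stable $3$-forms $\psi$ all vary with the multi-parameter family of closed $\theta$'s. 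Nothing in the proposal indicates that this family-by-family analysis terminates, so there is a genuine gap.

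The paper closes the argument much more directly, without any reduction to dimension six. After Dixmier, it works on each of the eleven algebras $\frn_1,\ldots,\frn_{11}$ of Conti--Fern\'andez with the \emph{generic} closed $1$-form $\theta=\sum_k\theta_k e^k$ and the \emph{generic} $2$-form $\sigma$, sets $\phi=d\sigma-\theta\W\sigma$, and tests positivity via the criterion that $\phi$ defines a $\G_2$-structure if and only if the symmetric bilinear map
\[
b_\phi(X,Y)=\frac16\,\iota_{\sst X}\phi\W\iota_{\sst Y}\phi\W\phi
\]
is definite. A polynomial computation, uniform in all the parameters $\theta_k,\sigma_{jk}$, shows $b_\phi(e_6,e_6)=0$ for $\frn_1,\ldots,\frn_6$ and $b_\phi(e_7,e_7)=0$ for $\frn_7,\ldots,\frn_{11}$; hence no $d_\theta$-exact $3$-form on these algebras is ever positive, and the Proposition follows. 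If you want to salvage your plan, this is the missing ingredient: replace the open-ended structural verification by a concrete degeneracy certificate for $b_\phi$ on each algebra, which is a single identity in the parameters rather than a case analysis over the kernels $\ker\theta$.
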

\begin{proof}
By \cite{CoFe}, a seven-dimensional non-Abelian nilpotent Lie algebra admitting closed G$_2$-structures is isomorphic to one of the following: 
\[
\renewcommand\arraystretch{1.2}
\begin{array}{rcl}
\frn_1 	&=& (0, 0, 0, 0, e^{12}, e^{13}, 0),\\
\frn_2 	&=& (0, 0, 0, e^{12}, e^{13}, e^{23}, 0),\\
\frn_3 	&=& (0, 0, e^{12}, 0, 0, e^{13} + e^{24}, e^{15}),\\
\frn_4	&=& (0, 0, e^{12}, 0, 0, e^{13}, e^{14} + e^{25}),\\
\frn_5 	&=& (0, 0, 0, e^{12}, e^{13}, e^{14}, e^{15}),\\
\frn_6 	&=& (0, 0, 0, e^{12}, e^{13}, e^{14} + e^{23}, e^{15}),\\
\frn_7 	&=& (0, 0, e^{12}, e^{13}, e^{23}, e^{15} + e^{24}, e^{16} + e^{34}),\\
\frn_8 	&=& (0, 0, e^{12}, e^{13}, e^{23}, e^{15} + e^{24}, e^{16} + e^{34} + e^{25}),\\
\frn_9	&=& (0, 0, e^{12}, 0, e^{13} + e^{24}, e^{14}, e^{46} + e^{34} + e^{15} + e^{23}),\\
\frn_{10} 	&=& (0, 0, e^{12}, 0, e^{13}, e^{24} + e^{23}, e^{25} + e^{34} + e^{15} + e^{16} - 3 e^{26}),\\
\frn_{11} 	&=& (0, 0, 0, e^{12}, e^{23},-e^{13}, 2 e^{26} - 2 e^{34} - 2 e^{16} + 2 e^{25}).
\end{array}
\renewcommand\arraystretch{1}
\]
To show the proposition, we will use Dixmier's result together with the following fact: 
a 3-form $\phi$ on a seven-dimensional Lie algebra $\frg$ defines a G$_2$-structure if and only if the symmetric bilinear map 
\[
b_\phi: \frg\times\frg \rightarrow \Lambda^7(\frg^*)\cong \frg,\quad (X,Y)\mapsto \frac16\,\iota_{\sst X} {\phi} \W \iota_{\sst Y} {\phi} \W {\phi}, 
\]
is definite (cf.~\cite{Hit}). 
Now, for every nilpotent Lie algebra $\frn_i$ appearing above, we consider the generic closed 1-form $\theta = \sum_{k=1}^7\theta_ke^k\in\Lambda^1(\frn_i^*)$, 
with some of the real numbers $\theta_k$ possibly zero as $d\theta=0$, and the generic $d_\theta$-exact 3-form $\phi = d\sigma-\theta\W\sigma$, 
where $\sigma = \sum_{1\leq j < k\leq 7} \sigma_{jk}e^{jk}\in\Lambda^2(\frn_i^*)$. 
Then, we compute the map $b_\phi$ associated with such a 3-form $\phi$, and we observe that in each case 
it cannot be definite. Indeed, it is just a matter of computation to show that $b_\phi(e_6,e_6)=0$ for the nilpotent Lie algebras $\frn_i$, with $i=1,2,3,4,5,6$, 
and that $b_\phi(e_7,e_7)=0$ for the remaining ones. 
\end{proof}


\section{Examples}\label{exsect}
We now use the results of the previous section to construct various examples of LCC G$_2$-structures that clarify the interplay between the conditions discussed in sections \ref{LCCsect} and \ref{LCCfirst}.  

First of all, we need to start with a six-dimensional Lie algebra admitting coupled SU(3)-structures. In the nilpotent case, the following classification is known (see \cite[Thm.~4.1]{FiRa1}). 
\begin{theorem}[\cite{FiRa1}]
Up to isomorphism, a six-dimensional non-Abelian nilpotent Lie algebra admitting coupled $\SU(3)$-structures is isomorphic to one of the following
\[
\frh_{1} = \left(0,0,0,0,e^{14}+e^{23},e^{13}-e^{24}\right),\quad \frh_{2} =  \left(0,0,0,e^{13},e^{14}+e^{23},e^{13}-e^{15}-e^{24}\right).
\]
In both cases, $(e^1,\ldots,e^6)$ is an $\SU(3)$-basis for a certain coupled structure $(\omega,\psi)$. 
\end{theorem}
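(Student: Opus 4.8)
The plan is to establish the two implications separately: existence of coupled structures on $\frh_1$ and $\frh_2$ by a direct computation, and exhaustiveness by a case analysis over the classification of six-dimensional nilpotent Lie algebras.

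First I would record a convenient reformulation. For any $\SU(3)$-structure $(\omega,\psi)$ one has the algebraic identity $\psi\W\omega=0$; hence imposing $d_\frh\omega=c\psi$ with $c\in\R\smallsetminus\{0\}$ forces $d_\frh\psi=c^{-1}d_\frh(d_\frh\omega)=0$ and $d_\frh\omega\W\omega=c\,\psi\W\omega=0$ for free. Thus a coupled $\SU(3)$-structure on $\frh$ is exactly an $\SU(3)$-structure obeying the single equation $d_\frh\omega=c\psi$ with $c\neq0$. For the existence half of the statement it then suffices to take on each of $\frh_1$, $\frh_2$ the pair $(\omega,\psi)$ in the normal form \eqref{SU3adapted} relative to the given basis and to compute $d_\frh\omega$ from the structure equations; in both cases one gets $d_\frh\omega=-\psi$, so the structure is coupled with constant $c=-1$.

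For the converse I would begin from the finite list of six-dimensional non-Abelian nilpotent Lie algebras and isolate necessary conditions. Because $\psi=c^{-1}d_\frh\omega$, the form $\psi$ must be \emph{exact}; it must also be a \emph{stable} $3$-form of complex type in the sense of Hitchin \cite{Hit}, that is $\lambda(\psi)<0$, so that it determines an almost complex structure $J_\psi$ and realises $\psi=\Re(\Psi)$ for a $(3,0)$-form $\Psi$; and finally there must exist a $2$-form $\omega$ with $d_\frh\omega=c\psi$ which is a positive $(1,1)$-form for $J_\psi$, so that the associated metric $g$ is positive definite. (The Abelian case drops out immediately, since $d\omega=0$ would force $c=0$.) Equivalently, one must decide for each candidate $\frh$ whether the image $d_\frh(\Lambda^2\frh^*)\subseteq\Lambda^3\frh^*$ contains an exact stable complex-type $3$-form admitting a compatible positive potential. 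Since coupled structures are in particular half-flat, one may further restrict at the outset to the known nilpotent Lie algebras carrying half-flat $\SU(3)$-structures, which shortens the list considerably.

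I would then organise the surviving candidates by their nilpotent invariants — nilpotency step, first Betti number $b_1=\dim\frh-\dim[\frh,\frh]$, and the dimensions of the descending central series — and, using the $\GL(6,\R)$-freedom to put $(\omega,\psi)$ in normal form, test the equation $d_\frh\omega=c\psi$ against each set of structure equations. The hard part will be this exhaustive elimination: for several algebras an exact stable complex-type $\psi$ does exist, and the subtle point is to rule out any accompanying $\omega$ on the grounds that the induced metric cannot be positive definite — stability of $\psi$ by itself does not secure this positivity, exactly as in the definiteness test used in the proof of Proposition \ref{CnoLCC}. Carrying the analysis through each case then leaves precisely $\frh_1$ and $\frh_2$.
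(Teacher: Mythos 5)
The paper itself contains no proof of this statement: it is imported verbatim from \cite[Thm.~4.1]{FiRa1}, so the comparison is necessarily with the argument given there. Your plan reproduces that argument in all essentials. The preliminary reduction is correct: since $\omega\W\psi=0$ holds for any $\SU(3)$-structure, the single equation $d_\frh\omega=c\psi$ with $c\neq0$ already yields $d_\frh\psi=0$ (by $d_\frh^2=0$) and $d_\frh\omega\W\omega=0$, so ``coupled'' is exactly this one equation. Your existence check is also right: taking $(\omega,\psi)$ in the normal form \eqref{SU3adapted} relative to the stated bases, the structure equations give $d_\frh\omega=-\psi$ on both $\frh_1$ and $\frh_2$, i.e.\ coupling constant $c=-1$ (consistent with what the paper records for $\frh_1$ at the start of Section~\ref{exsect}). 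For exhaustiveness, the route you describe --- restrict at the outset to the six-dimensional nilpotent Lie algebras admitting half-flat structures (Conti's classification, Math.~Ann.~350 (2011), which cuts the $34$ isomorphism classes down to $24$), then eliminate case by case using exactness of $\psi=c^{-1}d_\frh\omega$, Hitchin stability $\lambda(\psi)<0$, and positive definiteness of the induced metric --- is precisely the scheme of \cite{FiRa1}. Your remark that stability of an exact $3$-form does not by itself secure positivity of the metric correctly isolates the delicate point; it is the same definiteness test used in the proof of Proposition~\ref{CnoLCC}.

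The one caveat is completeness rather than correctness: as written, the elimination is a program, not a proof. The sentence ``carrying the analysis through each case then leaves precisely $\frh_1$ and $\frh_2$'' is where essentially all of the work in \cite{FiRa1} lives, and you do not exhibit the obstruction for even one of the roughly twenty algebras to be excluded. No step of your outline would fail, but to count as a full argument you would need to run, on each surviving algebra, the generic computation you allude to: take an arbitrary $\sigma\in\Lambda^2(\frh^*)$, test whether $d_\frh\sigma$ can be stable of complex type, and, where it can, verify that no compatible positive $\omega$ with $d_\frh\omega=c\,\psi$ exists.
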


Let us consider the coupled SU(3)-structure $(\omega,\psi)$ on $\frh_1$. 
Since $(e^1,\ldots,e^6)$ is an SU(3)-basis, the forms $\omega$ and $\psi$ can be written as in \eqref{SU3adapted}, and a simple computation shows that $d_{\frh_1}\omega=-\psi$. 
As observed in \cite{FiRa1}, the inner product $g=\sum_{i=1}^6(e^i)^2$ induced by $(\omega,\psi)$ is a {\em nilsoliton}, i.e., its Ricci operator is of the form 
\begin{equation}\label{RS}
\mathrm{Ric}(g) = -3\,\mathrm{Id} +4D_1, 
\end{equation}
where $D_{1}\in\mathrm{Der}(\frh_1)$ is given by
\[
D_1(e_1) = \frac12 e_1,~D_1(e_2) = \frac12 e_2,~D_1(e_3) = \frac12 e_3,~D_1(e_4) = \frac12 e_4,~D_1(e_5) = e_5,~D_1(e_6) = e_6,
\]
$(e_1,\ldots,e_6)$ being the basis of $\frh_1$ whose dual basis is the SU(3)-basis of $(\frh_1,\omega,\psi)$. 
For more details on nilsolitons we refer the reader to \cite{Lau}. 

We know that the rank-one extension $\frh_1{\rtimes_D}\R$ of $\frh_1$ induced by a derivation $D\in\mathrm{Der}(\frh_1)$ admits a G$_2$-structure defined by the 3-form 
$\f=\omega\W\eta+\psi$, and that the G$_2$-basis is given by $\left(e^1,\ldots,e^6,e^7\right)$ with $e^7\coloneqq\eta$. 
In what follows, we shall always write the structure equations of $\frh_1{\rtimes_D}\R$ with respect to such a basis. 

The first example we consider was discussed in \cite{FiRa1}. It consists of a solvable Lie algebra endowed with a LCC G$_2$-structure $\f$ inducing an Einstein inner product. 
As we will see, $\f$ is not exact, that is, its class $[\f]_\theta$ in the Lichnerowicz cohomology is not zero.
\begin{example}\label{ExEinstein}
Let us consider the derivation $D_{1}\in\mathrm{Der}(\frh_1)$ appearing in \eqref{RS}. 
The rank-one extension $\frh_1 {\rtimes_{D_1}}\R$ of $\frh_1$ has structure equations
\[
\left(\frac12 e^{17},\frac12 e^{27},\frac12 e^{37},\frac12 e^{47},e^{14}+e^{23}+e^{57},e^{13}-e^{24}+e^{67},0 \right). 
\]
Since $D_1^*\psi= 2\psi$ and the coupling constant is $c=-1$, 
the G$_2$-structure $\f = \omega\W\eta+\psi$ on $\frh_1 {\rtimes_{D_1}}\R$ is LCC with Lee form $\theta = -\eta$, by point \ref{prop1}) of Proposition \ref{PropCpdLCC}. 
Moreover, it induces the inner product $g_\f = g + \eta^2$, which is Einstein with Ricci operator $\mathrm{Ric}(g_\f) =  -3\,\mathrm{Id}$ by \cite[Lemma 2]{Lau1}. 
A simple computation shows that $\f$ cannot be equal to $d_\theta\sigma$ for any 2-form $\sigma\in  \Lambda^2((\frh_1 {\rtimes_{D_1}}\R)^*)$. 
In particular, it is of the second kind. 

We conclude this example observing that the Lie algebra $\frh_1 {\rtimes_{D_1}}\R$ is solvable and not unimodular, as $\mathrm{tr}(\mathrm{ad}_{e_7}) = \mathrm{tr}(D_1)=4.$ 
Thus, the corresponding simply connected solvable Lie group does not admit any compact quotient. 
\end{example}

The next two examples were obtained in \cite[Sect.~5]{FFR}. 
In the first one the LCC G$_2$-structure is of the first kind, while in the second one the LCC G$_2$-structure is exact but it is not of the first kind.  
\begin{example}[\cite{FFR}]\label{ex2}
Consider the derivation $D_2\in\mathrm{Der}(\frh_1)$ defined as follows 
\[
D_2(e_1) = -e_3,~D_2(e_2) = -e_4,~D_2(e_3) = e_1,~D_2(e_4) = e_2,~D_2(e_5) =0,~D_2(e_6)=0.
\]
Then, the rank-one extension $\frh_1{\rtimes_{D_2}}\R$ has structure equations 
\[
\left(e^{37},e^{47},-e^{17},-e^{27},e^{14}+e^{23},e^{13}-e^{24},0\right),
\]
and $D_2^*\omega=0$. Thus, by point \ref{prop2}) of Proposition \ref{PropCpdLCC}, we have that the 3-form $\f=\omega\W\eta+\psi$ defines a LCC G$_2$-structure of the first kind on $\frh_1{\rtimes_{D_2}}\R$ 
with Lee form $\theta=\eta$. 
\end{example}

\begin{example}[\cite{FFR}]\label{ex3}
Consider the rank-one extension $\frh_1{\rtimes_{D_3}}\R$, where $D_3\in\mathrm{Der}(\frh_1)$ is given by
\[
D_3(e_1) =  2 e_3,~D_3(e_2) = 2e_4,~D_3(e_3) = e_1,~D_3(e_4) = e_2,~D_3(e_5) =0,~D_3(e_6)=0. 
\]
The structure equations of $\frh_1{\rtimes_{D_3}}\R$ are the following
\[
\left(e^{37},e^{47},2e^{17},2e^{27},e^{14}+e^{23},e^{13}-e^{24},0\right).
\]
Since $D_3^*\psi=0$ but $D_3^*\omega\neq0$, 
the G$_2$-structure $\f = \omega\W\eta+\psi$ on $\frh_1 {\rtimes_{D_3}}\R$ is LCC with Lee form $\theta = \eta$, by point \ref{prop1}) of Proposition \ref{PropCpdLCC}. 
We observe that 
\[
\f = d_\theta\gamma,
\]
where $\gamma = \frac57 e^{12} - \frac37 e^{14} + \frac37 e^{23} - \frac17 e^{34} - e^{56}$ does not belong to $\Lambda^2_7((\frh_1{\rtimes_{D_3}}\R)^*)$. 
In this case, the only infinitesimal automorphisms of $\f$ are of the form $X = a\,e_5+b\,e_6 \in \frh_1{\rtimes_{D_3}}\R$, with $a,b\in\R$. Thus, $\f$ is of the second kind. 
\end{example}

\begin{remark}
As shown in \cite{FFR}, both the Lie algebras considered in examples \ref{ex2} and \ref{ex3} are solvable and unimodular, and 
the corresponding simply connected solvable Lie groups admit a lattice. 
Thus, both examples give rise to a compact seven-dimensional solvmanifold endowed with a LCC G$_2$-structure. 
\end{remark}

{\begin{remark}\label{exact-not-1stkind}
It was proved in \cite[Prop.~5.5]{BaMa} that on a unimodular Lie algebra every exact locally conformal symplectic structure is of the first kind. 
This is not the case in the G$_2$ setting: indeed, the LCC G$_2$-structure of Example \ref{ex3} is exact but not on the first kind, while the Lie algebra $\frh_1{\rtimes_{D_3}}\R$ is unimodular. 
\end{remark}

\medskip\noindent
{{\bf Acknowledgements.} The authors would like to thank Daniele Angella for useful conversations. 
The first author was supported by a \emph{Juan de la Cierva - Incorporaci\'on} Fellowship of Spanish Ministerio de Ciencia e Innovaci\'on. 
Both authors were partially supported by GNSAGA of INdAM - Istituto Nazionale di Alta Matematica.

\end{document}